\newtheorem{theorem}{Theorem}[section]
\newtheorem{corollary}{Corollary}[section]
\newtheorem{lemma}{Lemma}[section]
\newtheorem{remark}{Remark}[section]
\newtheorem{definition}{Definition}[section]
\newtheorem{proposition}{Proposition}[section]
\def \a{\alpha }
\def \l{\lambda }
\newcommand\restr[2]{{
  \left.\kern-\nulldelimiterspace
  #1
  \vphantom{\big|} 
  \right|_{#2} 
  }}
\begin{document}

\newcommand\nat{\mathbb N}
\newcommand\ganz{\mathbb Z}

\newcommand{\wta}{{\rm {wt} }  a }
\newcommand{\R}{\Bbb  R}

\newcommand{\wtb}{{\rm {wt} }  b }
\newcommand{\bea}{\begin{eqnarray}}
\newcommand{\eea}{\end{eqnarray}}
\newcommand{\be}{\begin {equation}}
\newcommand{\ee}{\end{equation}}
\newcommand{\g}{\frak g}
\newcommand{\hg}{\widehat {\frak g} }
\newcommand{\hn}{\hat {\frak n} }
\newcommand{\h}{\frak h}
\newcommand{\V}{\Cal V}
\newcommand{\hh}{\hat {\frak h} }
\newcommand{\n}{\frak n}
\newcommand{\Z}{\Bbb Z}
\newcommand{\N}{{\Bbb Z} _{> 0} }
\newcommand{\Zp} {\Z _ {\ge 0} }
\newcommand{\Hp}{\bar H}
\newcommand{\C}{\Bbb C}
\newcommand{\Q}{\Bbb Q}
\newcommand{\1}{\bf 1}
\newcommand{\la}{\langle}
\newcommand{\ra}{\rangle}
\newcommand{\NS}{\bf{ns} }

\newcommand{\wt}{{\rm {wt} }   }

\newcommand{\E}{\mathcal E}
\newcommand{\F}{\mathcal F}
\newcommand{\X}{\bar X}
\newcommand{\Y}{\bar Y}

\newcommand{\hf}{\mbox{$\tfrac{1}{2}$}}
\newcommand{\thf}{\mbox{$\tfrac{3}{2}$}}

\newcommand{\W}{\mathcal{W}}
\newcommand{\non}{\nonumber}
\def \l {\lambda}
\baselineskip=14pt
\newenvironment{demo}[1]%
{\vskip-\lastskip\medskip
  \noindent
  {\em #1.}\enspace
  }%
{\qed\par\medskip
  }

\def \l {\lambda}
\def \a {\alpha}

\title[On fusion rules for the Weyl vertex algebra]{On fusion rules and intertwining operators  for the Weyl vertex algebra}
 
\author[]{Dra\v zen  Adamovi\' c}
\author[]{Veronika Pedi\' c}
 
\keywords{vertex algebra, fusion rules, intertwining operators}
\subjclass[2010]{Primary    17B69; Secondary 17B20, 17B65}
\date{\today}
\begin{abstract}
In vertex algebra theory, fusion rules are described as the dimension of the vector space of intertwining operators between three irreducible modules. We describe fusion rules in the category of weight modules for the Weyl vertex algebra. This way we confirm the conjecture on fusion rules based on the Verlinde algebra. We explicitly construct intertwining operators appearing in the formula for fusion rules.  We present a result which relates irreducible weight  modules for the Weyl vertex algebra to the irreducible modules for the affine Lie superalgebra $\widehat{gl(1 \vert 1)}$.
\end{abstract}
\maketitle

\section{Introduction}

In the theory of vertex algebras and conformal field  theory, determination of fusion rules is one of the most important problems. By a result by Y. Z. Huang \cite{H} for  a rational vertex algebra, fusion rules can be determined by using the Verlinde formula. However, although there are certain versions of Verlinde formula for a broad class of non-rational vertex algebras, so far there is no proof that fusion rules for such algebras can be determined by using the Verlinde formula. One important example is the singlet vertex algebra for $(1,p)$--models whose irreducible representations were classified in \cite{A-2003}. Verlinde formula for fusion rules was also presented by T. Creutzig and A. Milas in  \cite{CM}, but so far the proof was only given for the case $p=2$ in  \cite{AdM-2017}.
We should also mention that the fusion rules  and intertwining operators for some affine and superconformal  vertex algebras were  studied in  \cite{A-2001}, \cite{A-CMP},  \cite{CRTW} and \cite{KR-CMP}.

In this paper we study the case of the Weyl vertex algebra, which we denote by $M$, also called the $\beta \gamma$ system in the physics litarature. Its Verlinde type conjecture for fusion rules was presented by S. Wood and D. Ridout in \cite{RW}. Here, we present a short proof of Verlinde conjecture in this case. We prove the following fusion rules result:
 \begin{theorem} Assume that $\lambda , \mu, \lambda + \mu \in  {\C} \setminus {\Z}$. Then we have:
 \bea && \rho_{\ell_1} (M) \times    \rho_{\ell_2 }  (\widetilde{U(\lambda)} )   =  \rho_{\ell_1+\ell_2 }  (\widetilde{U(\lambda})),  \label{fusion-uv-1}  \\
&&  \rho_{\ell_1}  (\widetilde{U(\lambda )} )  \times   \rho_{\ell_2 }  (\widetilde{U(\mu )} )  =   \rho_{\ell_1+\ell_2 }  (\widetilde{U(\lambda  + \mu)} )   + \rho_{\ell_1+\ell_2 -1}  (\widetilde{U(\lambda  + \mu)} ),    \label{fus-uv-2}  \eea
where  $\widetilde{U(\lambda )} $ is an irreducible weight module, and $\rho_{\ell}$, $\ell \in {\Z}$,  are the spectral flow automorphisms defined by  (\ref{sepctral-flow-def}).
\end{theorem}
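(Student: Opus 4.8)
The plan is to recall that each fusion coefficient equals the dimension of the corresponding space of intertwining operators, and then to reduce the two displayed formulas to a single basic computation. First I would show that the spectral flow automorphisms are compatible with the fusion product, namely that $\rho_{\ell_1}(W_1)\times\rho_{\ell_2}(W_2)\cong\rho_{\ell_1+\ell_2}(W_1\times W_2)$ for modules in the weight category. Since $\rho_\ell$ acts on the generating fields by a fixed shift of the mode index, precomposing an intertwining operator of type $\binom{W_3}{W_1\,W_2}$ with this relabelling yields an intertwining operator of type $\binom{\rho_{\ell_1+\ell_2}W_3}{\rho_{\ell_1}W_1\ \rho_{\ell_2}W_2}$, and one checks that this assignment is a linear bijection between the two spaces. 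Granting this compatibility, the formula (\ref{fusion-uv-1}) reduces to the statement $M\times\widetilde{U(\lambda)}=\widetilde{U(\lambda)}$, i.e.\ that the vertex algebra $M$ is a unit for fusion, and (\ref{fus-uv-2}) reduces to the single identity $\widetilde{U(\lambda)}\times\widetilde{U(\mu)}=\widetilde{U(\lambda+\mu)}+\rho_{-1}(\widetilde{U(\lambda+\mu)})$.

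For this remaining identity I would prove a lower and an upper bound on the fusion coefficients. For the lower bound I would construct intertwining operators explicitly using the free-field (Weyl) realization of the modules $\widetilde{U(\lambda)}$. Writing $w$ for a generating vector of $\widetilde{U(\lambda)}$, I would define $\mathcal{Y}(w,z)$ acting on $\widetilde{U(\mu)}$ as a fractional power of $z$ carrying the conformal-weight and charge data, times an exponential in the negative modes of the generating currents, and extend the definition to all of $\widetilde{U(\lambda)}$ using the commutator formulas with $a(z)$ and $a^*(z)$. Verifying the Jacobi identity reduces to a finite system of relations on the leading coefficients; I expect this system to have a two-dimensional solution space, and the two independent solutions should produce nonzero intertwining operators of types $\binom{\widetilde{U(\lambda+\mu)}}{\widetilde{U(\lambda)}\ \widetilde{U(\mu)}}$ and $\binom{\rho_{-1}\widetilde{U(\lambda+\mu)}}{\widetilde{U(\lambda)}\ \widetilde{U(\mu)}}$, giving each fusion coefficient at least $1$.

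For the upper bound I would use that an intertwining operator intertwines the action of the zero mode $h_0$ of the Heisenberg charge current and, up to the standard shift, of $L_0$. Matching $h_0$-eigenvalues shows that the target must carry charge $\equiv\lambda+\mu\pmod{\Z}$, while matching $L_0$-data constrains the conformal weights correspondingly; since spectral flow shifts the charge by integers and the conformal weight accordingly, by the classification of irreducible weight modules this confines the target $W_3$ to the spectral-flow orbit of $\widetilde{U(\lambda+\mu)}$. I would then analyse the leading term of $\mathcal{Y}(w,z)$ via the relations $[a_m,\mathcal{Y}(w,z)]$ and $[a^*_m,\mathcal{Y}(w,z)]$ evaluated on generating vectors; the genericity hypothesis $\lambda,\mu,\lambda+\mu\in\C\setminus\Z$ should pin down the admissible spectral-flow shifts to $\ell\in\{0,-1\}$ and bound each multiplicity by $1$, matching the lower bound.

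The step I expect to be the main obstacle is precisely this upper bound. The charge and conformal-weight gradings restrict the possible targets to the infinite spectral-flow orbit of $\widetilde{U(\lambda+\mu)}$, but excluding all but the two members $\rho_0$ and $\rho_{-1}$, and showing that the multiplicities are exactly one, requires the finer zero-mode structure of the relaxed generating space together with the irreducibility guaranteed by the genericity hypothesis. An alternative route to this bound, worth exploring, is to transport the computation to the affine Lie superalgebra $\widehat{gl(1 \vert 1)}$ through the correspondence announced in the abstract, where the two summands may have a more transparent representation-theoretic origin.
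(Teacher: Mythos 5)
Your overall architecture (explicit construction for the lower bound, a constraint argument for the upper bound) matches the shape of the problem, but both halves have genuine gaps, and the one you yourself flag as ``the main obstacle'' is fatal as proposed. The upper bound cannot be extracted from $\beta(0)$- and $L(0)$-matching plus zero-mode analysis. Writing $h_r(c)=-\tfrac{1}{2}(1-r)(r+2c)$ for the lowest conformal weight of the charge-$c$ sector of $\Pi_r(\lambda)$ (equivalently, of the corresponding spectral-flow image of $\widetilde{U}$), the discrepancy $h_{r_3}(c_1+c_2)-h_{r_1}(c_1)-h_{r_2}(c_2)$ is affine-linear in $(c_1,c_2)$ with slopes $r_3-r_1$ and $r_3-r_2$, hence constant modulo $\Z$ as $c_1,c_2$ run over their charge lattices; since the exponent coset $s+\Z$ of an intertwining operator is not fixed in advance but is determined by the modules, this congruence merely defines $s$ and places no restriction whatsoever on $r_3$ --- every member of the spectral-flow orbit passes the charge and weight test, so your claim that these gradings confine the target and then genericity pins down $\ell\in\{0,-1\}$ has no mechanism behind it. Nor can the ``finer zero-mode structure'' rescue this: the top components are modules over the Weyl algebra $A_1$, which has no coproduct, so $U(\lambda)\otimes U(\mu)$ carries no natural $A_1$-module structure and there is no analogue of a tensor-product decomposition with which to bound targets or multiplicities. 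This is precisely why the paper tensors with the Clifford algebra $F$: inside $M\otimes F$ one has $V_1(gl(1\vert 1))=\mbox{Ker}_{M\otimes F}E(0)$ (Proposition \ref{simpl-gl11}), the modules $S\Pi_r(\lambda)=\Pi_r(\lambda)\otimes F$ decompose into irreducible highest-weight $\hg$--modules by (\ref{dec-int-12}) (which rests on Theorem \ref{ired-general}), the $gl(1\vert 1)$ tensor decomposition (\ref{dek-2}) yields Proposition \ref{non-gl11}, forcing $\lambda_3=\lambda_1+\lambda_2$ and $r_3\in\{r_1+r_2,\,r_1+r_2-1\}$ (Theorem \ref{fusion-rules-1}), and the isomorphism $I_{M\otimes F}\cong I_M$ of intertwining-operator spaces from \cite{ADL} transports the bound back to $M$. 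So the route you mention only as an ``alternative worth exploring'' is not an alternative; it is the proof, and nothing in your proposal substitutes for it.

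The lower bound is also incomplete, in two ways. First, verifying the Jacobi identity for ad hoc fractional-power exponential operators is exactly the hard step; the paper avoids it by embedding $M\hookrightarrow\Pi(0)$ and invoking the lattice theory of \cite{DL} to produce the $\Pi(0)$-intertwining operator (\ref{int-pi}), which restricts to an $M$-intertwining operator (Proposition \ref{konstrukcija-int-op}). Second, and more importantly, such a construction naturally produces only one of the two required operators, the one compatible with the $\Pi(0)$-structure; the second one is not a $\Pi(0)$-intertwining operator at all. The paper obtains it by reading the same lattice operator through a second embedding, i.e.\ by twisting with the order-$4$ automorphism $g$ ($a\mapsto -a^*$, $a^*\mapsto a$), and this step needs Proposition \ref{auto}, whose hypothesis $\omega-g(\omega)\in\mbox{Im}(D)$ matters because $g$ does not fix the conformal vector. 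Your expectation of a ``two-dimensional solution space'' of leading coefficients is exactly the assertion to be proved, not an argument. Finally, your opening reduction to $\ell_1=\ell_2=0$ via $\rho_{\ell_1}(W_1)\times\rho_{\ell_2}(W_2)=\rho_{\ell_1+\ell_2}(W_1\times W_2)$ is itself unproven: Li's construction (used in Proposition \ref{sc-fusion-1}) shifts the second input and the output, and moving a spectral flow onto the first input requires an additional argument (e.g.\ skew-symmetry of intertwining operators); the paper sidesteps this entirely by proving the result for all $\ell_1,\ell_2$ simultaneously in the $\Pi_r(\lambda)$ realization.
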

The fusion rules  (\ref{fusion-uv-1}) was proved in Proposition \ref{sc-fusion-1}, and it is a direct  consequence of the construction of H. Li \cite{Li-1997}. 
 The  main contribution of our paper is vertex-algebraic  proof  of (\ref{fus-uv-2}) which  uses   the theory of intertwining operators for vertex algebras and the fusion rules for the affine vertex superalgebra $V_1(gl(1 \vert 1))$.

  We also prove a general irreducibility result which relates irreducible weight modules for the Weyl vertex algebra $M$ to irreducible weight modules for $V_1(gl(1 \vert 1))$ (see Theorem  \ref{ired-general}).
 
 \begin{theorem}
 Assume that $\mathcal N$ is an irreducible weight $M$--module. Then $\mathcal N \otimes F$ is a completely reducible  $V_1(gl(1 \vert 1))$--module.
 \end{theorem}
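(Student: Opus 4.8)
The plan is to deduce the statement from the free-field (Wakimoto-type) realization of $V_1(gl(1\vert 1))$ as a vertex subalgebra of $M \otimes F$, where $F$ denotes the fermionic (Clifford) vertex algebra entering that realization. Under this embedding one has an even central current $E$ with zero mode $E_0$, a Cartan current $N$, and two odd currents $\psi^{\pm}$. Since $V_1(gl(1\vert 1)) \subset M \otimes F$, every $M \otimes F$-module restricts to a $V_1(gl(1\vert 1))$-module; as $\mathcal N$ is an $M$-module and $F$ is a module over itself, the space $\mathcal N \otimes F$ is an $M \otimes F$-module, hence a $V_1(gl(1\vert 1))$-module, and in fact a simple one. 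The entire content of the theorem is therefore the branching of this simple $M \otimes F$-module to the subalgebra $V_1(gl(1\vert 1))$, and the claim is that this branching is semisimple.

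First I would record the commuting structure that organizes the decomposition. The zero modes $N_0$ and $E_0$ act semisimply on $\mathcal N \otimes F$: since $\mathcal N$ is a weight module for $M$, the relevant Heisenberg zero mode acts semisimply on $\mathcal N$, tensoring with $F$ preserves this, and the fermion-number operator supplies the remaining grading. I would then identify the commutant of $V_1(gl(1\vert 1))$ inside $M \otimes F$ as a rank-one Heisenberg vertex algebra, so that $\mathcal N \otimes F$ decomposes as $\bigoplus_n \Pi_n \otimes L_n$, where the $\Pi_n$ are Fock modules for this coset Heisenberg and the $L_n$ are candidate $V_1(gl(1\vert 1))$-modules graded by the joint $(N_0,E_0)$-spectrum.

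The engine of the complete-reducibility argument is the relation $\{\psi^+_0,\psi^-_0\} = E_0$ (up to a nonzero constant) coming from the defining relations of $gl(1\vert 1)$, together with the fact that $E_0$ is central in $\widehat{gl(1\vert 1)}$. On any component where $E_0$ acts as a nonzero scalar, the operators $E_0^{-1}\psi^+_0\psi^-_0$ and $E_0^{-1}\psi^-_0\psi^+_0$ are complementary idempotents commuting with the even part; using them one splits the component into typical irreducible pieces, and the odd currents then identify these with standard irreducible $\widehat{gl(1\vert 1)}$-modules. This disposes of every weight space on which the central charge $E_0$ is invertible.

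The main obstacle is the atypical sector, namely the weight spaces on which $E_0$ has eigenvalue zero, which genuinely occur (for instance when $\mathcal N = M$, where the vacuum carries $E_0=0$). There the idempotent trick fails and one must rule out logarithmic (indecomposable) $V_1(gl(1\vert 1))$-submodules by hand. The key point I would exploit is that $\mathcal N \otimes F$ is not an arbitrary module but a Fock-type free-field module: on it $L_0$ and the grading operators act semisimply with finite-dimensional joint eigenspaces, so any would-be self-extension would manifest as a nilpotent part of one of these operators, which the weight-module hypothesis forbids. Converting this observation into a clean proof that each graded component $L_n$ is actually irreducible, rather than merely indecomposable, is the delicate step, and I expect it to rest on the explicit free-field formulas for $\psi^{\pm}$ together with the classification of irreducible weight $M$-modules and the irreducibility statement of Theorem~\ref{ired-general} to pin down each $L_n$ precisely.
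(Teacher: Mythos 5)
Your opening setup (restriction along $V_1(gl(1\vert 1))\subset M\otimes F$ and simplicity of $\mathcal N\otimes F$ as an $M\otimes F$--module) agrees with the paper, but the three steps that are supposed to do the work each fail. First, the commutant identification is false: any $w$ in $\mathrm{Com}(V_1(gl(1\vert 1)), M\otimes F)$ satisfies in particular $E(0)w=0$, and by Kac's theorem (Proposition \ref{simpl-gl11}) $\mathrm{Ker}_{M\otimes F}E(0)$ \emph{is} $V_1(gl(1\vert 1))$; so the commutant sits inside $V_1(gl(1\vert 1))$ itself (it is its centre, generated by the isotropic current $E$, with $\langle E,E\rangle=0$), and there is no complementary rank-one Heisenberg and no tensor factorization $\bigoplus_n \Pi_n\otimes L_n$ into Fock modules. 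Second, the idempotent $E(0)^{-1}\Psi^+(0)\Psi^-(0)$ commutes only with zero modes, not with the affine action: for instance $[N(n),\Psi^+(0)\Psi^-(0)]=\Psi^+(n)\Psi^-(0)-\Psi^+(0)\Psi^-(n)\neq 0$ for $n\neq 0$, so this splits each weight space as a module over the horizontal $gl(1\vert 1)$ but does not decompose $\mathcal N\otimes F$ as a $V_1(gl(1\vert 1))$--module. Third, the atypical sector is not handled at all: you explicitly defer the crucial irreducibility step to ``the irreducibility statement of Theorem \ref{ired-general}'', which is precisely the theorem to be proved, so the argument is circular; moreover your appeal to finite-dimensional joint eigenspaces is unavailable in the stated generality (the modules $\widetilde{U(\lambda,\mu)}$ of Subsection \ref{more-modules} are weight modules with infinite-dimensional weight spaces, and the theorem is applied to them), and semisimplicity of $L(0)$, $E(0)$, $N(0)$ would in any case only exclude Jordan blocks, not non-split extensions between non-isomorphic irreducibles.

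The missing idea is the one the paper uses, and it eliminates the typical/atypical dichotomy entirely. Since $V_1(gl(1\vert 1))=\mathcal U^0$ is exactly the degree-zero subalgebra of $\mathcal U=M\otimes F$ for the $\mathbb Z$--gradation by $E(0)$--charge, and since the weight hypothesis makes $E(0)=\beta(0)+\gamma(0)$ act semisimply on $\mathcal N\otimes F$ with spectrum in a single coset $\Delta+\mathbb Z$, the eigenspace decomposition $\mathcal N\otimes F=\bigoplus_s \mathcal L_s$, $\mathcal L_s=\{v : E(0)v=(s+\Delta)v\}$, is compatible with the gradation: $\mathcal U^r\cdot\mathcal L_s\subset\mathcal L_{r+s}$. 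Now take any nonzero $w\in\mathcal L_s$: simplicity of $\mathcal N\otimes F$ over $\mathcal U$ gives $\mathcal U\cdot w=\mathcal N\otimes F$, and comparing degrees forces $\mathcal U^0\cdot w=\mathcal L_s$. Hence each $\mathcal L_s$ is an irreducible $V_1(gl(1\vert 1))$--module and $\mathcal N\otimes F$ is completely reducible. In short: keep your first paragraph, discard the coset and idempotent machinery, and replace it by Proposition \ref{simpl-gl11} together with this graded-simplicity argument.
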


 The construction of intertwining operators appearing in the fusion rules is based on two different embeddings of the Weyl vertex algebra $M$ into  the lattice vertex algebra $\Pi(0)$. Then one  $\Pi(0)$--intertwining operator gives two different $M$--intertwining operators. Therefore, both intertwining operators are realized as $\Pi(0)$--intertwining operators. Once we tensor the Weyl vertex algebra $M$ with the Clifford vertex algebra $F$, we can use the fusion rules for $V_1(gl(1 \vert 1))$ to calculate the fusion rules for $M$.

  It is known that fusion rules can be determined by using fusion rules for the singlet vertex algebra (cf. \cite{AdM-2017}, \cite{CM}). However, we believe that our methods, which use $V_1(gl(1 \vert 1) )$, can be generalized to a wider class of vertex algebras. In our future work we plan to study the following related  fusion rules problems:
  
  \begin{itemize}
  \item Connect fusion rules for higher rank Weyl vertex algebra with fusion rules for $V_1(gl(n \vert m))$.
  \item Extend fusion ring with weight modules having infinite-dimensional weight spaces (cf. Subsection \ref{more-modules}) and possibly with irreducible  Whittaker modules (cf.  \cite{ALPY-2018}).

  \end{itemize}

\vskip 5mm
{\bf Acknowledgment.}

This work was done in
part during the authors’ stay at the Research Institute for Mathematical Sciences
(RIMS), Kyoto in July 2018. 
 The results of this paper were also  reported by V. P. at the Southeastern Lie Theory Workshop X
University of Georgia,  Athens, GA, June 10--12, 2018.

The authors  are  partially supported   by the
QuantiXLie Centre of Excellence, a project coffinanced
by the Croatian Government and European Union
through the European Regional Development Fund - the
Competitiveness and Cohesion Operational Programme
(KK.01.1.1.01.0004).

\section{Fusion rules and intertwining operators}

In this section we recall the definition of intertwining operators and fusion rules. More details can be found in  \cite{FZ}, \cite{FHL}, \cite{DLM}, \cite{CHY}. We also prove an important result on the action of certain automorphisms on intertwining operators. This result will enable us to produce new intertwining operators from the existing one.

Let $V$ be a conformal vertex algebra with the conformal vector $\omega$ and let $Y(\omega, z)= \sum_{n \in {\Z}} L(n) z^{-n-2}$. We assume that the derivation in the vertex algebra $V$ is $D=L(-1)$. A $V$-module (cf. \cite{LL}) is a vector  
space $M$  endowed with
 a linear map $Y_M$ from
$V$ to the space of $End(M)$-valued fields
$$ a\mapsto Y_M(a,z)=\sum_{n\in\Z}a^M_{(n)}z^{-n-1}
$$ such that:
\begin{enumerate}
\item $Y_M(|0\rangle,z)=I_M$,
\item for $a,b \in V$,
\begin{align*}& z_0^{-1}\delta\Big(\frac{z_1-z_2}{z_0}\Big)Y_M(a,z_1)Y_M(b,z_2) - z_0^{-1}\delta\Big(\frac{z_2-z_1}{-z_0}\Big)Y_M(b,z_2)Y_M(a,z_1)\\
    &= z_2^{-1}\delta\Big(\frac{z_1-z_0}{z_2}\Big)Y_M(Y(a,z_0)b,z_2).
\end{align*}
\end{enumerate}

Given three $V$-modules $M_1$, $M_2$, $M_3$, an {\it intertwining operator of type $\Big(\begin{matrix}M_3\\M_1\ M_2\end{matrix}\Big)$} (cf.   \cite{FHL}, \cite{FZ}) is a   map $I:a\mapsto I(a,z)=\sum_{n\in \ganz}a^I_{(n)}z^{-n-1}$ from
$M_1$ to the space of $Hom(M_2,M_3)$-valued fields such that:
\begin{enumerate}
\item for $a \in V$, $b \in M_1$, $c\in M_2$, the following Jacobi identity holds:
\begin{align*}&  z_0^{-1}\delta\Big(\frac{z_1-z_2}{z_0}\Big)Y_{M_3}(a,z_1)I(b,z_2)c - z_0^{-1}\delta\Big(\frac{z_2-z_1}{-z_0}\Big)I(b,z_2)Y_{M_2}(a,z_1)c\\
    &= z_2^{-1}\delta\Big(\frac{z_1-z_0}{z_2}\Big)I(Y_{M_{1}}(a,z_0)b,z_2)c,  
\end{align*}
\item for every $a \in M_1$, \[ I(L(-1)a,z) = \frac{d}{dz}I(a, z).\] 
\end{enumerate}

We let $I { M_3 \choose M_1 \ M_2}$ denote the space of intertwining operators of type ${ M_3 \choose M_1 \ M_2}$, and set 
$$N^{M_3}_{M_1,M_2}=\dim I\left(\begin{matrix}M_3\\M_1\ M_2\end{matrix}\right).$$ When $N^{M_3}_{M_1,M_2}$ is finite, it is  usually called a {\it fusion coefficient}.\par

 Assume that in the category 
$K$ of $L(0)$-diagonalizable
$V$-modules, irreducible modules  $\{M _i \ \vert \ i \in I\}$, where $I$ is an index set, have the following properties
\begin{itemize}
\item[(1)]
  for every $i, j \in I$,  $N^{M_k}_{M_i,M_j}$ is finite for any $k\in I$;
  \item[(2)]  $N^{M_k}_{M_i,M_j} = 0 $ for all but finitely many $k \in I$.
  \end{itemize}
  
Then the algebra with basis $\{e_i  \in I \}$ and product
$$e_i\cdot e_j = \sum_{k\in I } N^{M_k}_{M_i,M_j}  e_k$$
is called the {\it fusion algebra} of $V, K$.

\vskip5pt

Let $K$ be a category of  $V$-modules. Let $M_1$, $M_2$ be irreducible $V$-modules in $K$. Given an irreducible $V$-module $M_3$ in $K$, we will say that the fusion  rule
\begin{equation}\label{fr-sc}
  M_1 \times M_2=M_3
  \end{equation}
holds in $K$ if $N^{M_3}_{M_1,M_2}=1$ and  $N^{R}_{M_1,M_2}=0$
   for any other irreducible $V$-module  $R$ in $K$ which is not isomorphic to $M_3$.

We say that an irreducible $V$-module  $M_1$ is a simple current in $K$   if $M_1$ is in $K$ and, for every irreducible $V$-module  $M_2$ in $K$, there is an irreducible $V$-module $M_3$ in $K$, such that the fusion rule \eqref{fr-sc} holds in $K$
 (see \cite{DLM}).
 
Recall that for any automorphism $g$ of $V$, and any $V$--module $(M,Y_M(\cdot, z))$, we have a new $V$--module $M \circ g=M^g$, such that $M^g \cong M$ as a vector space and the vertex operator $Y_M^g$ is given by $Y_M^g(v, z) := Y_M(g v,z)$, for $v \in V$. Namely, the only axiom we have to check is the Jacobi identity, and we have:
\begin{align*}
&z_0^{-1}\delta\Big(\frac{z_1-z_2}{z_0}\Big)Y_M^g(a,z_1)Y_M^g(b,z_2) - z_0^{-1}\delta\Big(\frac{z_2-z_1}{-z_0}\Big)Y_M^g(b,z_2)Y_M^g(a,z_1)\\
&= z_0^{-1}\delta\Big(\frac{z_1-z_2}{z_0}\Big)Y_M(ga,z_1)Y_M(gb,z_2) - z_0^{-1}\delta\Big(\frac{z_2-z_1}{-z_0}\Big)Y_M(gb,z_2)Y_M(ga,z_1) \\
& = z_2^{-1}\delta\Big(\frac{z_1-z_0}{z_2}\Big)Y_M(Y(ga,z_0)gb,z_2)\\
& = z_2^{-1}\delta\Big(\frac{z_1-z_0}{z_2}\Big)Y_M^g(Y^g(a,z_0)b,z_2).
\end{align*}
Therefore, $M^g$ is a $V$--module.
The following proposition shows that automorphism $g$ also produces a new intertwining operator. 

\begin{proposition}\label{auto} Let $g$ be an automorphism of the vertex algebra $V$ satisfying the condition
\bea \label{uvjet-autom} &&  \omega - g(\omega) \in \mbox{Im} (D).\eea
Let 
 $M_1$, $M_2$, $M_3$ be $V$--modules and $I(\cdot, z)$ an intertwining operator of type  ${ M_3 \choose M_1 \ M_2}$. Then we have an intertwining operator $I^g$ of type ${ M_3 ^ g \choose M_1 ^ g  \ M_2 ^ g}$, such that
$I^g (b, z_1) = I(b, z_1)$, for all $b \in M_1$. Moreover,
\[ N^{M_3}_{M_1,M_2} = N^{M_3 ^ g }_{M_1 ^ g ,M_2 ^  g}. \]
\end{proposition}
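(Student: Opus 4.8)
The plan is to take the \emph{same} linear map underlying $I$ and reinterpret the three modules as their $g$--twists. Since $M_i^g \cong M_i$ as vector spaces, I would set $I^g(b,z):=I(b,z)$ for $b\in M_1^g=M_1$ and verify the two defining axioms of an intertwining operator of type ${M_3^g \choose M_1^g\ M_2^g}$. For the Jacobi identity I would insert the definitions $Y_{M_3^g}(a,z)=Y_{M_3}(ga,z)$, $Y_{M_2^g}(a,z)=Y_{M_2}(ga,z)$ and $Y_{M_1^g}(a,z)=Y_{M_1}(ga,z)$, together with $I^g=I$. After these substitutions the Jacobi identity for $I^g$ with test element $a\in V$ becomes verbatim the Jacobi identity for $I$ with $a$ replaced by $ga$; since $ga\in V$ and $I$ satisfies the identity for every element of $V$, it holds. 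Note that this step does not use the hypothesis \eqref{uvjet-autom}.

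The hypothesis enters only in verifying the $L(-1)$--derivative axiom. Writing $L^g(n)$ for the Virasoro modes on the twisted modules, I would compute $Y_M^g(\omega,z)=Y_M(g\omega,z)$. Using $\omega-g(\omega)=D v$ for some $v\in V$ and the translation property $Y_M(Dv,z)=\tfrac{d}{dz}Y_M(v,z)$, this yields $Y_M^g(\omega,z)=Y_M(\omega,z)-\tfrac{d}{dz}Y_M(v,z)$, so that $L^g(n)=L(n)+(n+1)v_{(n)}$. The crucial point is that the coefficient of $v_{(n)}$ vanishes at $n=-1$, whence $L^g(-1)=L(-1)$ on every $V$--module. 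Therefore $I^g(L^g(-1)b,z)=I(L(-1)b,z)=\tfrac{d}{dz}I(b,z)=\tfrac{d}{dz}I^g(b,z)$, which is exactly the required identity, and $I^g$ is an intertwining operator of the claimed type.

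For the equality of fusion coefficients I would show that $I\mapsto I^g$ is a linear isomorphism $I{M_3 \choose M_1\ M_2}\to I{M_3^g \choose M_1^g\ M_2^g}$. Linearity is immediate. For invertibility I would check that $g^{-1}$ is again an automorphism satisfying \eqref{uvjet-autom}: since any vertex-algebra automorphism commutes with $D$, one has $\omega-g^{-1}(\omega)=D(-g^{-1}v)\in\mbox{Im}(D)$. Applying the construction to $g^{-1}$ and using $(M^g)^{g^{-1}}=M$ shows that $I\mapsto I^g$ and $J\mapsto J^{g^{-1}}$ are mutually inverse, giving $N^{M_3}_{M_1,M_2}=N^{M_3^g}_{M_1^g,M_2^g}$. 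The only non-formal point is the $L(-1)$ computation above: the whole role of hypothesis \eqref{uvjet-autom} is to force $L^g(-1)=L(-1)$, so that the one axiom singling out the operator $L(-1)$ survives the twist; without it the twisted $I$ would obey a modified derivative identity and would fail to be an intertwining operator in the standard sense.
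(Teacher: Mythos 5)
Your proof is correct and follows essentially the same route as the paper: the Jacobi identity is verified by the same substitution $a\mapsto ga$, and the hypothesis $\omega-g(\omega)\in\mbox{Im}(D)$ is used exactly as in the paper to conclude $L^g(-1)=L(-1)$ (the paper phrases this as $(Dv)_0=0$, you as the vanishing of the coefficient $(n+1)v_{(n)}$ at $n=-1$, which is the same computation). Your final paragraph merely makes explicit the inverse map $J\mapsto J^{g^{-1}}$ behind the dimension equality, a point the paper leaves implicit.
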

\begin{proof} 
We have:
\begin{align*}
&z_0^{-1}\delta\Big(\frac{z_1-z_2}{z_0}\Big)Y_{M_3}^g(a,z_1)I^g(b,z_2)c - z_0^{-1}\delta\Big(\frac{z_2-z_1}{-z_0}\Big)I^g(b,z_2)Y_{M_2}^g(a,z_1)c\\
&= z_0^{-1}\delta\Big(\frac{z_1-z_2}{z_0}\Big)Y_{M_3}(ga,z_1)I(b,z_2)c - z_0^{-1}\delta\Big(\frac{z_2-z_1}{-z_0}\Big)I(b,z_2)Y_{M_2}(ga,z_1)c\\
&= z_2^{-1}\delta\Big(\frac{z_1-z_0}{z_2}\Big)I(Y_{M_{1}}(ga,z_0)b,z_2)c\\ 
&= z_2^{-1}\delta\Big(\frac{z_1-z_0}{z_2}\Big)I(Y_{M_{1}}^g(a,z_0)b,z_2)c.
\end{align*}

Set $$Y^g (\omega, z) = \sum_{n \in {\Z}} L(n) ^g z^{-n-1}. $$
Since $g(\omega) = \omega + D v$ for certain $v \in V$, we have that
$$ g(\omega) _0 = \omega_0 + (Dv)_0 = \omega_0 = L(-1). $$
This implies that
$ L(-1)^g = L(-1)$.
Hence for $a \in M_1$ we have
$$ I^g ( L(-1) ^g  a, z)  =  I^g ( L(-1)   a, z)   =I (L(-1) a, z) = \frac{d}{d z} I(a, z) =  \frac{d}{d z} I^g (a, z).  $$
Therefore, $I^g$ has the $L(-1)$--derivation property and $I^g$ is an intertwining operator of type ${ M_3 ^ g \choose M_1 ^ g  \ M_2 ^ g}$.

\end{proof}

\begin{remark}
 	If $V$ is a vertex operator algebra and $g$ an automorphism of $V$, then $g(\omega)= \omega$ and the condition (\ref{uvjet-autom}) is automatically satisfied. In our applications, $g$ will only be a vertex algebra automorphism such that $g(\omega) \ne \omega$, yet the condition (\ref{uvjet-autom}) will be satisfied.
\end{remark}

\section{The Weyl vertex algebra}
\label{Weyl}

\subsection{The Weyl vertex algebra}
The  {\it Weyl algebra} $\widehat{\mathcal A}$ is an associative algebra with generators
$$ a(n), a^{*} (n) \quad (n \in {\Z})$$ and relations
\bea  && \  \  [a(n), a^{*} (m)] = \delta_{n+m,0}, \ \  [a(n), a(m)] = [a ^{*} (m), a ^* (n) ] = 0 \ \ (n,m \in {\Z}).   \label{comut-Weyl}  \eea

Let $M$ denote the simple {\it Weyl module} generated by the cyclic vector ${\bf 1}$ such that
$$ a(n) {\bf 1} = a  ^* (n+1) {\bf 1} = 0 \quad (n \ge 0). $$
As a vector space, $$ M \cong {\C}[a(-n), a^*(-m) \ \vert \ n > 0, \ m \ge 0 ]. $$

 There is a unique vertex algebra $(M, Y, {\bf 1})$  (cf. \cite{FB}, \cite{KR},  \cite{efren}) where
the  vertex operator map is $$ Y: M \rightarrow \mbox{End}(M) [[z, z ^{-1}]] $$
such that
$$ Y (a(-1) {\bf 1}, z) = a(z), \quad Y(a^* (0) {\bf 1}, z) = a ^* (z),$$
$$ a(z)   = \sum_{n \in {\Z} } a(n) z^{-n-1}, \ \ a^{*}(z) =  \sum_{n \in {\Z} } a^{*}(n)
z^{-n}. $$

In particular we have:
$$ Y (a(-1) a^*  (0) {\bf 1}, z) =  a(z) ^+ a^* (z)  +  a ^* (z)  a(z) ^- , $$
where
$$a (z) ^+ = \sum_{n \le -1} a(n) z ^{-n-1}, \quad a (z) ^- = \sum_{n \ge  0} a(n) z ^{-n-1}. $$

 Let $\beta := a(-1) a^*  (0) {\bf 1}$. Set $\beta (z) = Y(\beta ,z) =  \sum_{n \in {\Z} } \beta (n) z ^{-n-2}$. Then $\beta$ is a Heisenberg vector in $M$ of level $-1$. This means that the components of the field $\beta(z)$ satisfy the commutation relations
$$[ \beta(n), \beta(m)] = - n \delta_{n+m,0} \quad (n, m \in {\Z}). $$
 Also, we have the following formula
$$ [\beta(n), a(m) ] = - a (n+m), \quad  [\beta(n), a^* (m) ] = a^* (n+m). $$

The vertex algebra $M$ admits a family  of Virasoro vectors
$$ \omega_{\mu} = (1-\mu) a  (-1) a^*  (-1) {\bf 1} - \mu a (-2) a^{*} (0) {\bf 1}  \quad (\mu \in {\C})$$
of central charge $c_{\mu} = 2 (6 \mu (\mu-1) +1)$. Let 
$$L^{\mu} (z) = Y(\omega, z) = \sum_{n \in {\Z} } L^{\mu} (n) z^{-n-2}.$$
This means that the components of the field $L(z)$ satisfy the relations
$$[L^{\mu} (n), L^{\mu} (m) ] = (n-m) L^{\mu} (m+n) + \frac{n^3 - n} {12} \delta_{n+m,0} c_{\mu}.$$

For $\mu =0$, we write $\omega = \omega_{\mu}$, $L^{\mu} (n) = L(n)$. Then $c_{\mu} = 2$. Clearly
$$\omega_{\mu} = \omega - \mu \beta(-2) {\bf 1}. $$

Since $(\beta(-2)\textbf{1})_0 = (D\beta)_0$, we have that 

\bea &&  L^{\mu}(-1) = L(-1), \quad \mbox{for every} \ \mu \in \mathbb{C}. \label{der-mu}   \eea

For $n, m \in {\Z}$ we have
$$ [L(n), a(m) ] = - m a(n+m), \quad [L(n), a^*(m) ] = - (m+ n) a^*(n+m). $$
In particular,
$$[L(0), a(m) ] =-m a(m), [L(0), a^*(m)]=-m a^*(m). $$


\begin{lemma} \label{classification-zhu}
	Assume that $W = \bigoplus_{\ell \in {\Z_{\ge 0} }  } W(\ell) $ is a ${\Z}_{\ge 0}$--graded $M$--module with respect to $L(0)$.  Then $$L(0) \equiv 0 \quad \mbox{on} \ W(0). $$
\end{lemma}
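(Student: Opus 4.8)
The plan is to compute the zero mode $L(0)=\omega_{(1)}$ explicitly in terms of the modes $a(n),a^{*}(n)$ and then to exploit the defining grading property of a $\Z_{\ge 0}$--graded module to see that every summand of $L(0)$ annihilates the bottom component $W(0)$.

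First I would record what the grading means. For a module graded with respect to $L(0)$, a homogeneous vector $v\in M$ of weight $\wt v$ (i.e. $L(0)v=(\wt v)v$) acts so that $v_{(m)}W(\ell)\subseteq W(\ell+\wt v-m-1)$, with the convention $W(\ell)=0$ for $\ell<0$. Since $a(-1){\bf 1}$ has weight $1$ and $a^{*}(0){\bf 1}$ has weight $0$, a direct bookkeeping of modes shows that both $a(n)$ and $a^{*}(n)$ map $W(\ell)$ into $W(\ell-n)$; this is of course consistent with the relations $[L(0),a(m)]=-m\,a(m)$ and $[L(0),a^{*}(m)]=-m\,a^{*}(m)$ recorded above.

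Next I would compute $L(0)$. Since $a^{*}(-1){\bf 1}=L(-1)a^{*}(0){\bf 1}$, we have $Y(a^{*}(-1){\bf 1},z)=\frac{d}{dz}a^{*}(z)$, and writing $\omega=a(-1)\,(a^{*}(-1){\bf 1})$ as a normal ordered product and extracting the coefficient $\omega_{(1)}$ by a direct computation (using the standard formula for the modes of a normal ordered product) one obtains
\be
 L(0)=\sum_{n\ge 1} n\big( a^{*}(-n)a(n)-a(-n)a^{*}(n)\big).
\ee
One checks this formula on the generating vectors $a(-n){\bf 1}$ and $a^{*}(-n){\bf 1}$, recovering the expected weights. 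The essential feature is that in every summand the positive mode $a(n)$ or $a^{*}(n)$, $n\ge 1$, stands to the right, and that there is no additive constant.

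Finally, for $w\in W(0)$ and any $n\ge 1$ we have $a(n)w\in W(-n)=0$ and $a^{*}(n)w\in W(-n)=0$ by the grading property; hence each summand of $L(0)$ kills $w$, and the sum (which is finite on each homogeneous vector) gives $L(0)w=0$. This proves $L(0)\equiv 0$ on $W(0)$. The only genuine work is the mode computation of $\omega_{(1)}$ and keeping the normal--ordering conventions straight; once the explicit form of $L(0)$ is in hand the grading argument is immediate and needs no estimates, since every relevant term vanishes identically rather than merely summing to zero.
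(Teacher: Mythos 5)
Your proof is correct and takes essentially the same route as the paper's: both compute the explicit mode expansion $L(0)=\sum_{n\ge 1} n\bigl(a^{*}(-n)a(n)-a(-n)a^{*}(n)\bigr)$ and then observe that the positive modes $a(n),a^{*}(n)$, $n\ge 1$, annihilate $W(0)$ because they lower the grading into $W(-n)=0$. The additional bookkeeping you supply (the mode conventions, the identity $a^{*}(-1)\mathbf{1}=L(-1)a^{*}(0)\mathbf{1}$, finiteness of the sum) is sound but only makes explicit what the paper's shorter argument leaves implicit.
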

\begin{proof}
Since $W$ is ${\Z}_{\ge 0}$ with top component $W(0)$, the operators $a(n), a^*(n)$ must act trivially on $W(0)$ for all $n \in {\Z}_{>0}$.  Since
$$ L(z) = :\partial a^*(z)  a (z): = \sum_{n \in {\Z} } L(n) z ^{-n-2}, $$
we have
$$ L(0) = \sum_{n= 1} ^{\infty}  n (  a ^* (-n) a (n)   - a(-n) a^* (n)) \equiv 0 \quad (\mbox{on} \ W(0)). $$
The Lemma holds. 
	\end{proof}
	
	\begin{remark}
	One can show that Zhu's algebra $A(M) \cong A_1$ and that $[\omega] = 0$ in $A(M)$. This can give a second proof of Lemma \ref{classification-zhu}.
	\end{remark}

\begin{definition}
A module $W$ for the Weyl algebra $\widehat{\mathcal A}$ is called restricted if the following condition holds:
\begin{itemize}
\item For every $w \in W$, there is $N \in {\Z}_{\ge 0}$ such that
$$ a(n) w = a^* (n) w = 0, \quad \mbox{for} \ n \ge N. $$
\end{itemize}
\end{definition}

 \subsection{Automorphisms of the Weyl algebra} 
 
 Denote by $\mbox{Aut} (\widehat{ {\mathcal A}})$ the group of automorphisms of the Weyl algebra $\widehat{ {\mathcal A}}$. For any $f \in \mbox{Aut} (\widehat{ {\mathcal A}})$, and  $\widehat{ {\mathcal A}}$--module $N$, one can construct $\widehat{ {\mathcal A}}$--module $f(N)$ as follows:
 $$ f(N) := N \quad \mbox{as vector space, and action is} \ \ x. v = f(x) v \quad (v \in N).$$
 For $f, g \in  \mbox{Aut} (\widehat{ {\mathcal A}})$,  we have
\bea \label{composit-action}    (f \circ g) (N) = g (f(N)). \eea
 For every $ s \in {\Z}$ the Weyl  algebra $\widehat{ {\mathcal A}}$ admits the following automorphism
\bea   \rho_s ( a(n) ) = a (n+s), \quad  \rho_s ( a^* (n) ) = a^*  (n-s).  \label{sepctral-flow-def} \eea
 Then $\rho_s$ is an automorphism of  $\widehat{ {\mathcal A}}$ which can be lifted to an automorphism of the vertex algebra $M$. Automorphism $\rho_s$ is called spectral flow automorphism.
 
 Assume that $U$ is any restricted module for $\widehat{ {\mathcal A}}$. Then $\rho_s(U)$ is also a restricted  module for  $\widehat{ {\mathcal A}}$ and $\rho_s(U)$ is a module for the vertex algebra $M$.

Let $\mathcal K$, be the category of weight $M$--modules such that the operators $\beta(n)$, $n \ge 1$, act locally nilpotent on each module $N$  in $\mathcal K$. Applying the automorphism $\rho_s$ to the vertex algebra $M$, we get $M$--module $\rho_s(M)$, which is a simple current in the category $\mathcal K$. The proof is essentially given by H. Li in \cite[Theorem 2.15]{Li-1997} in a slightly different setting.  
 
\begin{proposition} \label{sc-fusion-1}  \cite{Li-1997}
Assume that $N$ is an irreducible weight $M$--module in the category $\mathcal K$. Then the following fusion rules hold:
\bea  \rho_{s_1} (M) \times \rho_{s_2} (N)  =  \rho_{s_1+ s_2} (N) \quad (s_1, s_2 \in {\Z}). \label{fusion-sc} \eea
\end{proposition}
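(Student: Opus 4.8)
The plan is to obtain (\ref{fusion-sc}) from H. Li's theory of simple currents via the $\Delta$-operator \cite{Li-1997}, reducing the two-parameter identity to the single assertion
\[ \rho_s(M)\times N=\rho_s(N)\qquad(s\in\Z) \]
for every irreducible $N$ in $\mathcal K$. Granting this, the general case is purely formal: since $\rho_{s_2}$ carries irreducible weight modules in $\mathcal K$ to irreducible weight modules in $\mathcal K$, I may apply the single assertion with $N$ replaced by $\rho_{s_2}(N)$ to get $\rho_{s_1}(M)\times\rho_{s_2}(N)=\rho_{s_1}(\rho_{s_2}(N))$, and then the composition law (\ref{composit-action}) together with $\rho_{s_2}\circ\rho_{s_1}=\rho_{s_1+s_2}$ gives $\rho_{s_1}(\rho_{s_2}(N))=\rho_{s_1+s_2}(N)$. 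Thus the whole content is that each $\rho_s(M)$ is a simple current whose fusion functor is $\rho_s$.

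For the single assertion I would start from the seed $s=0$, namely $M\times N=N$. Here the module map $Y_N(\cdot,z)$ is an intertwining operator of type ${N\choose M\ N}$; more precisely the space $I{R\choose M\ N}$ is canonically isomorphic to $\mbox{Hom}_M(N,R)$, since such an intertwining operator is determined by its value on $\mathbf 1$, which the $L(-1)$-derivation property forces to be a $z$-independent module homomorphism. By Schur's lemma this space is one-dimensional for $R\cong N$ and zero otherwise, so $M\times N=N$. To reach arbitrary $s$ I would realize the spectral flow through Li's $\Delta$-operator attached to the Heisenberg vector $\beta$ of level $-1$: the module $(N,Y_N(\Delta(s\beta,z)\,\cdot\,,z))$ is isomorphic to $\rho_s(N)$, and the construction carries the canonical $Y_N$ to a nonzero element of $I{\rho_s(N)\choose\rho_s(M)\ N}$. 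The same device should produce linear isomorphisms $I{R\choose M\ N}\cong I{\rho_s(R)\choose\rho_s(M)\ N}$ for every irreducible $R$; as $R\mapsto\rho_s(R)$ is a bijection of the irreducibles of $\mathcal K$, the values $N^{N}_{M,N}=1$ and $N^{R}_{M,N}=0$ $(R\not\cong N)$ then transport to $N^{\rho_s(N)}_{\rho_s(M),N}=1$ and $N^{R'}_{\rho_s(M),N}=0$ $(R'\not\cong\rho_s(N))$, which is exactly the single assertion.

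Two points need care inside $\mathcal K$. First, $\rho_s$ must be an autoequivalence of $\mathcal K$; this holds because a direct check gives $\rho_s(\beta(n))=\beta(n)+s\delta_{n,0}$, so for $n\ge 1$ the action of $\beta(n)$ on $\rho_s(N)$ coincides with its action on $N$ and local nilpotence is preserved. Second, and this is the main obstacle, the spectral flow does \emph{not} fix the conformal vector: one computes $\rho_s(L(-1))=L(-1)-s\beta(-1)$, so $\omega-\rho_s(\omega)\notin\mbox{Im}(D)$ and the shortcut of Proposition \ref{auto} is unavailable. The $\Delta$-operator must therefore be arranged so that it simultaneously implements the mode shift $a(n)\mapsto a(n+s)$, intertwines the two module structures, and reconciles the $L(-1)$-derivation property with the shifted $L(-1)$ in the target frame. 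Proving that it induces an \emph{isomorphism} of the relevant spaces of intertwining operators, and not merely a nonzero map, is the technical heart of the argument; this is precisely what \cite[Theorem 2.15]{Li-1997} carries out in a slightly different setting.
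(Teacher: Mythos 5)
Your proposal is correct and takes essentially the same route as the paper: both arguments rest on the seed fusion rule $M\times N=N$ together with Li's $\Delta$-operator realization of spectral flow, and both defer the technical core --- that the $\Delta$-twist induces isomorphisms (not merely nonzero maps) between the relevant spaces of intertwining operators --- to \cite{Li-1997}. The differences are only organizational: the paper shifts the second and third slots via \cite[Proposition 2.4]{Li-1997}, setting $I_{s_2}(v,z):=I(\Delta(-s_2\beta,z)v,z)$, whereas you normalize to $s_2=0$ and shift the first and third slots (the mirror-image pass through the same machinery); note also that with the paper's conventions for the level $-1$ Heisenberg vector $\beta$, the twist implementing $\rho_s$ is $\Delta(-s\beta,z)$ rather than $\Delta(s\beta,z)$.
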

\begin{proof}
First we notice that  if $N$ is an irreducible $M$--module in $\mathcal K$, we have the following fusion rules
\bea  M \times N = N. \label{fusion-basic} \eea
Using \cite{Li-1997}, one can prove that $\rho_s(M)$ is constructed from $M$ as:
$$ (\rho_s(M), Y_s (\cdot,z)):= (M, Y( \Delta(-s \beta, z)\cdot, z)),$$
where
$$ \Delta(v, z):=z^{v_0} \exp\left( \sum_{n=1}^{\infty} \frac{v_n}{-n} (-z) ^{-n}\right). $$
Assume that $N_i$, $i=1,2,3$ are irreducible modules in $\mathcal K$. By  \cite[Proposition 2.4]{Li-1997}   from an intertwining operator   $I(\cdot, z)$  of type  ${ N_3 \choose N_1 \ N_2}$,  one  can construct intertwining operator   $I_{s_2}(\cdot, z)$  of type
${  \rho_{s_2} (N_3) \choose N_1 \ \rho_{s_2} (N_2)}$, where
$$I_{s_2} (v, z):= I(\Delta(-s_2 \beta, z) v, z) \quad (v \in  N_1 ). $$
Now, the fusion rules (\ref{fusion-sc}) follows easily  from the above construction using (\ref{fusion-basic}). 
\end{proof}

Consider the following automorphism of the Weyl vertex algebra
\bea  g : && M \rightarrow M  \nonumber \\
                   &&  a\mapsto  -a^{*}, \ \ 
                    a^*   \mapsto   a \nonumber \eea 
 Assume that $U$ is any $M$--module.
 Then $U^{g }= U \circ g $ is generated by the following fields
 $$ a_{g} (z) = -  a^*(z), \ a^ * _{g} (z) =   a(z). $$
 As an $\widehat {\mathcal A}$--module, $U^{g}$ is obtained from $U$ by applying the following automorphism $g$:
  \bea &&  a(n) \mapsto - a^* (n+1), \ \ a^{*} (n) \mapsto a(n- 1). \label{autom-1} \eea
 This implies that 
\bea && g =  \rho_{-1} \circ \sigma = \sigma \circ \rho_{1} \label{autom-2} \eea where $\sigma$ is the  automorphism of  $\widehat {\mathcal A}$ determined by
\bea && a(n) \mapsto - a^* (n), \ \ a^{*} (n) \mapsto a(n). \label{autom-3} \eea
 The automorphism $g$ is then a vertex algebra automorphism of order $4$.
 Denote by $\sigma_0 $ the restriction of $\sigma$ on $A_1$.
Using (\ref{autom-1})-(\ref{autom-3})  we get  the following result:
 
 \begin{lemma} \label{djelovanje-autom}
  Assume that $W$ is an irreducible   $M$--module.
  Then $$W^g \cong \rho_{1} (\sigma (W) ). $$
 \end{lemma}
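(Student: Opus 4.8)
The plan is to reduce the statement to the composition rule \eqref{composit-action} for twisting a module by an automorphism of $\widehat{\mathcal A}$, together with the factorization $g=\sigma\circ\rho_1$ recorded in \eqref{autom-2}. The starting point is the identity established just before the lemma: as an $\widehat{\mathcal A}$--module, $W^g$ is nothing but $g(W)$. Indeed, the fields of $W^g$ are $a_g(z)=-a^*(z)$ and $a^*_g(z)=a(z)$, which at the level of modes is exactly the substitution $a(n)\mapsto -a^*(n+1)$, $a^*(n)\mapsto a(n-1)$ of \eqref{autom-1}, i.e.\ the action of $g$ on $W$. So the first step is simply to restate $W^g=g(W)$ as $\widehat{\mathcal A}$--modules.

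Next I would record that $W$, being an irreducible weight $M$--module, is in particular a restricted $\widehat{\mathcal A}$--module; hence, as noted for $\rho_s$ in the text, the twisted spaces $\sigma(W)$ and $\rho_1(\sigma(W))$ are again $M$--modules, so that all three of $W^g$, $\sigma(W)$ and $\rho_1(\sigma(W))$ are genuine $M$--modules and may legitimately be compared.

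Then, applying the composition rule \eqref{composit-action} with $f=\sigma$ and $g=\rho_1$, together with $g=\sigma\circ\rho_1$ from \eqref{autom-2}, I would compute
\[
W^g = g(W) = (\sigma\circ\rho_1)(W) = \rho_1(\sigma(W)).
\]
This is literally an equality of $\widehat{\mathcal A}$--module structures on the one underlying space $W$: on $\rho_1(\sigma(W))$ an element $x$ acts by $\sigma(\rho_1(x))=(\sigma\circ\rho_1)(x)=g(x)$, which is precisely its action on $g(W)=W^g$. Concretely, $\sigma(\rho_1(a(n)))=\sigma(a(n+1))=-a^*(n+1)=g(a(n))$ by \eqref{sepctral-flow-def}, \eqref{autom-3} and \eqref{autom-1}, and likewise for $a^*(n)$, so the two actions coincide on generators.

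The one point requiring care---and the step I expect to be the main obstacle---is the passage from this equality of $\widehat{\mathcal A}$--module structures to an \emph{isomorphism of $M$--modules}. I would resolve it by invoking the correspondence between restricted $\widehat{\mathcal A}$--modules and $M$--modules: since $Y_M$ is generated by the fields $a(z)$ and $a^*(z)$, the $M$--module structure on a restricted module is completely determined by its $\widehat{\mathcal A}$--module structure, and twisting by the vertex algebra automorphism $g$ (of order $4$) agrees with the associated $\widehat{\mathcal A}$--twist. Consequently the equality $W^g=\rho_1(\sigma(W))$ of $\widehat{\mathcal A}$--modules upgrades to the desired isomorphism $W^g\cong\rho_1(\sigma(W))$ of $M$--modules. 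Here irreducibility enters only to guarantee that $W$ is a weight, hence restricted, module, so that this correspondence applies.
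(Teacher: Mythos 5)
Your proof is correct and follows essentially the same route as the paper: identify $W^g$ with the $\widehat{\mathcal A}$--twist $g(W)$ via (\ref{autom-1}), use the factorization $g=\sigma\circ\rho_1$ of (\ref{autom-2}), and apply the composition rule (\ref{composit-action}) to obtain $\rho_1(\sigma(W))$. One minor quibble: the restrictedness of $W$ (and hence of the twisted modules) follows directly from the truncation axiom satisfied by any $M$--module, not from irreducibility or the weight property, so your closing remark mis-attributes that point, though nothing in the argument actually depends on it.
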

 \begin{proof}
We have that as an $\widehat{\mathcal A}$--module:
 $$  W^ g =     (\rho_{-1} \circ \sigma)  (W). $$ 
 Since $\rho_{-1} \circ \sigma = \sigma \circ \rho_1$, by applying  (\ref{composit-action}) we get:
$$ W^ g =   (\sigma \circ \rho_1 )(W) = \rho_1 (\sigma (W)).$$
The proof follows.
 \end{proof}

\subsection{Weight modules for the Weyl vertex algebra}

\begin{definition} A module $W$ for the Weyl vertex algebra $M$ is called {\bf weight} if the operators $\beta(0)$ and $L(0)$ act semisimply on $W$. 
\end{definition}

 Clearly, vertex algebra $M$ is a weight module. We will now construct a family of weight modules.
 
 \begin{itemize}
 \item Recall that the first Weyl algebra $A_1$ is generated by $x, \partial _x$ with the commutation relation
 $$[\partial_x, x ] = 1. $$
 \item For every $\lambda \in {\C}$, $$U(\lambda) :=x^{\lambda} {\C}[x, x^{-1}]$$ has the structure of an  $A_1$--module.
 
 \item $U(\lambda)$ is irreducible if and only if $\lambda \in {\C} \setminus {\Z}$.
 
 \item Note that $a(0), a^*(0)$ generate a subalgebra of the Weyl algebra, which is isomorphic to the first Weyl algebra $A_1$. Therefore $U(\lambda)$ can be treated as an $A_1$--module by letting $a(0) = \partial_x$, $a^*(0) = x$. 
 
  \item  By applying the automorphism $\sigma_0$ on $U(\lambda)$ we get
 $$ \sigma_0  (U(\lambda)) \cong U(-\lambda). $$
Indeed, let   $z_{-\mu-1}  = \frac{x^{\mu}} {\Gamma(\mu+1)}$, where  $\mu \in {\C}\setminus{\Z}$. Then
$$ \sigma_0( a(0)). z_{-\mu}  = - \frac{x^{\mu}} {\Gamma(\mu)} = -   \mu  \frac{x^{\mu}} {\Gamma(\mu +1)} = -\mu z_{-\mu -1}. $$
$$ \sigma_0( a^* (0)). z_{-\mu}  =  (\mu-1) \frac{x^{\mu-2}} {\Gamma(\mu)} = \frac{x^{\mu-2}} {\Gamma(\mu-1)} = z_{-\mu +1}  . $$

\item  Define the following subalgebras of $\widehat{\mathcal A}$:
 $$ \widehat{\mathcal A}_{\ge 0 } =   {\C}[a( n), a^*( m) \ \vert \ n, m \in {\Z}_{\ge 0}  ], $$
  $$ \widehat{\mathcal A}_{<  0 } =   {\C}[a( - n), a^*( - n) \ \vert \ n \in {\Z}_{\ge 1}  ]. $$

 \item The $A_1$--module structure on $U(\lambda)$ can be extended to a structure of $\widehat{\mathcal A}_{\ge 0 }$--module by defining
 $$ \restr{a(n)}{U(\lambda)} = \restr{a^*(n)}{U(\lambda)} \equiv 0 \qquad (n \ge 1). $$

 \item Then we have the  induced module for the Weyl algebra:
 $$\widetilde{U(\lambda)} =  \widehat{\mathcal A} \otimes  _{.    \widehat{\mathcal A}_{\ge 0 }     }  U(\lambda)$$
 \item[] which is isomorphic to
 $$ {\C}[a (-n), a^*(-n) \ \vert n \ge 1] \otimes U(\lambda)$$
as a vector space.

 \end{itemize}
 
 \begin{proposition}
 For every $\lambda \in {\Bbb C} \setminus {\Z}$, $\widetilde{U(\lambda)}$ is an irreducible weight module for the Weyl vertex algebra $M$.
  \end{proposition}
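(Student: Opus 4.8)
The plan is to establish two separate assertions: first that $\widetilde{U(\lambda)}$ is a genuine module for the Weyl vertex algebra $M$ (not merely for the Weyl algebra $\widehat{\mathcal A}$), and second that it is irreducible and a weight module in the sense of the preceding definition. For the first point, I would observe that $\widetilde{U(\lambda)}$ is by construction a restricted module for $\widehat{\mathcal A}$: any element is a finite linear combination of vectors $a(-n_1)\cdots a^*(-m_k)\otimes u$ with $u \in U(\lambda)$, and since the nonnegative modes annihilate $U(\lambda)$ for indices $\ge 1$ while $a(0), a^*(0)$ act within $U(\lambda)$, the positive modes eventually truncate on any fixed vector. By the standard correspondence between restricted $\widehat{\mathcal A}$--modules and $M$--modules (the same mechanism already invoked for $\rho_s(U)$ in the subsection on automorphisms), this restricted module structure promotes to an $M$--module structure.

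Next I would verify the weight condition, namely that $\beta(0)$ and $L(0)$ act semisimply. The cleanest route is to compute their action on the generating subspace $1 \otimes U(\lambda)$ and then propagate through the commutation relations. Using $[\beta(0), a(m)] = -a(m)$ and $[\beta(0), a^*(m)] = a^*(m)$, together with $[L(0), a(m)] = -m\,a(m)$ and $[L(0), a^*(m)] = -m\,a^*(m)$ from the explicit formulas in the text, one sees that the monomials $a(-n_1)\cdots a^*(-m_j)\otimes x^{\lambda + k}$ are simultaneous eigenvectors: the $\beta(0)$--eigenvalue tracks the number of $a^*$ factors minus $a$ factors shifted by $\lambda+k$, and the $L(0)$--eigenvalue is the total mode-weight. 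Since $U(\lambda) = x^\lambda \C[x,x^{-1}]$ already diagonalizes $\beta(0) = a(-1)a^*(0)$ restricted to degree zero, and the Weyl generators raise or lower these eigenvalues by integers, both operators act semisimply with a basis of joint eigenvectors. This gives the weight property.

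For irreducibility I would argue by a standard highest-weight/PBW descent. Let $N \subseteq \widetilde{U(\lambda)}$ be a nonzero $M$--submodule; equivalently a nonzero $\widehat{\mathcal A}$--submodule. Because the annihilation operators $a(n), a^*(n)$ for $n \ge 1$ act locally nilpotently (indeed they lower $L(0)$--degree and the spectrum is bounded below on any fixed vector), any nonzero $v \in N$ can be pushed down by repeated application of positive modes to land in the degree-zero space $1 \otimes U(\lambda)$, so $N \cap (1 \otimes U(\lambda)) \ne 0$. But $U(\lambda)$ is irreducible as an $A_1$--module precisely because $\lambda \in \C \setminus \Z$, so this intersection must equal all of $1 \otimes U(\lambda)$. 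Finally, applying the creation operators $a(-n), a^*(-n)$ for $n \ge 1$ and invoking the PBW isomorphism $\widetilde{U(\lambda)} \cong \C[a(-n), a^*(-n) : n \ge 1] \otimes U(\lambda)$ shows that $N$ contains the whole module, forcing $N = \widetilde{U(\lambda)}$.

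The main obstacle is the descent step in the irreducibility argument: one must show carefully that applying positive modes to an arbitrary element of $N$ yields a nonzero element of the top space rather than annihilating everything prematurely. The key is that the $L(0)$--grading is $\Z_{\ge 0}$--bounded below on each vector together with the fact that the positive modes strictly decrease the grading, so a minimal-degree argument applies; combined with the nondegeneracy of the Heisenberg-type pairing $[a(n), a^*(-n)] = 1$, one sees that lowering cannot kill a lowest nonzero homogeneous component without contradiction. Once this descent is pinned down, the remainder follows from the irreducibility of $U(\lambda)$ for $\lambda \notin \Z$ and the freeness of the induced module.
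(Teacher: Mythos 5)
Your proof is correct, and its overall skeleton agrees with the paper's: reduce a nonzero submodule $N$ to its lowest $L(0)$--homogeneous component, show that this component must lie in degree zero, then invoke irreducibility of $U(\lambda)$ as an $A_1$--module (which is exactly where $\lambda \notin \Z$ enters) and the fact that the induced module is generated by its top component. The difference lies in how the crucial step --- ``a vector annihilated by all positive modes sits in degree zero'' --- is justified. The paper disposes of it by citing Lemma \ref{classification-zhu}: for any $\Z_{\ge 0}$--graded $M$--module, the normally ordered expansion $L(0) = \sum_{n\ge 1} n\left( a^*(-n)a(n) - a(-n)a^*(n)\right)$ shows $L(0) \equiv 0$ on the top component; applied to the submodule $N$ itself (regraded so its lowest component is its top), this forces the lowest $L(0)$--eigenvalue of $N$ to be $0$. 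You instead prove the same fact by hand through the PBW realization: the positive modes $a(n)$, $a^*(n)$ act on $\C[a(-m), a^*(-m) \,\vert\, m \ge 1] \otimes U(\lambda)$ as the derivations $\partial/\partial a^*(-n)$ and $-\partial/\partial a(-n)$, so their common kernel is exactly ${\bf 1} \otimes U(\lambda)$, and your minimal-degree descent finishes the argument. Both mechanisms are sound. The paper's lemma is slicker and is reused elsewhere (it also underlies Proposition \ref{ired-weyl-1}, identifying the modules $\Pi_\ell(\lambda)$ with spectral-flow images of $\widetilde{U(\lambda)}$, and connects to the remark that $A(M) \cong A_1$ with $[\omega]=0$ in Zhu's algebra); your derivation argument is more elementary and self-contained, and it makes explicit the local nilpotence of the annihilation modes, which the paper leaves implicit. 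One small point to pin down if you write this up: homogeneous components of an element of $N$ again lie in $N$ (a standard consequence of $L(0)$ acting semisimply on $\widetilde{U(\lambda)}$), which is what licenses working with a lowest nonzero homogeneous component in the first place.
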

 \begin{proof}
 The proof follows from Lemma \ref{classification-zhu} and the fact that 	$\widetilde{U(\lambda)}$ is a ${\Z_{\ge 0}}$--graded $M$--module whose top component is an irreducible module for $A_1$.
 	\end{proof}
 Applying Lemma  \ref{djelovanje-autom} we get:
   \begin{corollary} \label{ired-relaxed}
 For every $\lambda \in {\Bbb C} \setminus {\Z}$  and $s \in {\Z}$ we have  $$\widetilde{U(\lambda)} ^{g} \cong \rho_{1} ( \widetilde{U(-\lambda)}), \quad \left( \rho_{-s+1} ( \widetilde{U(\lambda)}) \right)^g  \cong \rho_{s} ( \widetilde{U(-\lambda)}).  $$
  \end{corollary}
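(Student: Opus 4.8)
The plan is to obtain both isomorphisms from Lemma \ref{djelovanje-autom}, which converts a $g$-twist into a $\sigma$-twist followed by a single spectral-flow shift, together with the identification $\sigma_0(U(\lambda)) \cong U(-\lambda)$ already carried out on the top component, and the commutation relation between $\sigma$ and the spectral flow $\rho_s$.

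For the first isomorphism I would apply Lemma \ref{djelovanje-autom} to $W = \widetilde{U(\lambda)}$, giving $\widetilde{U(\lambda)}^g \cong \rho_1(\sigma(\widetilde{U(\lambda)}))$; it then suffices to show $\sigma(\widetilde{U(\lambda)}) \cong \widetilde{U(-\lambda)}$. By (\ref{autom-3}) we have $\sigma(a(n)) = -a^*(n)$ and $\sigma(a^*(n)) = a(n)$, so $\sigma$ preserves both subalgebras $\widehat{\mathcal A}_{\ge 0}$ and $\widehat{\mathcal A}_{<0}$, hence the triangular decomposition and the $\Z_{\ge 0}$-grading, since $a(-n)$ and $a^*(-n)$ are each sent to an element of degree $n$. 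Thus $\sigma(\widetilde{U(\lambda)})$ is again an irreducible $\Z_{\ge 0}$-graded $M$-module whose top component is the underlying space $U(\lambda)$ with $a(0), a^*(0)$ now acting through $\sigma_0$; this $\sigma_0$-twist is precisely $U(-\lambda)$ by the computation in the excerpt. The characterization of $\widetilde{U(-\lambda)}$ as the induced module with that top then forces $\sigma(\widetilde{U(\lambda)}) \cong \widetilde{U(-\lambda)}$, and therefore $\widetilde{U(\lambda)}^g \cong \rho_1(\widetilde{U(-\lambda)})$.

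For the second isomorphism I would first record the commutation relation $\sigma \circ \rho_s = \rho_{-s} \circ \sigma$, a one-line verification on the generators using (\ref{autom-3}) and (\ref{sepctral-flow-def}). Combined with $g = \sigma \circ \rho_1$ from (\ref{autom-2}), this yields $\rho_{-s+1} \circ g = g \circ \rho_{s-1}$. Applying the composition rule (\ref{composit-action}) I then get
\[
\left(\rho_{-s+1}(\widetilde{U(\lambda)})\right)^g = (\rho_{-s+1} \circ g)(\widetilde{U(\lambda)}) = (g \circ \rho_{s-1})(\widetilde{U(\lambda)}) = \rho_{s-1}\bigl(\widetilde{U(\lambda)}^g\bigr).
\]
Substituting the first isomorphism and using $\rho_a(\rho_b(N)) = \rho_{a+b}(N)$, this becomes $\rho_{s-1}(\rho_1(\widetilde{U(-\lambda)})) \cong \rho_s(\widetilde{U(-\lambda)})$, as required.

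The generator-level commutation checks are routine; the one step deserving genuine care is the identification $\sigma(\widetilde{U(\lambda)}) \cong \widetilde{U(-\lambda)}$, where I must verify that twisting by $\sigma$ is compatible with the induction functor — that $\sigma$ carries the top component to the top component and restricts there to $\sigma_0$ — rather than disturbing the grading. Once this compatibility is secured, the remaining manipulations are purely formal applications of (\ref{composit-action}) and additivity of the spectral flow.
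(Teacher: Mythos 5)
Your proposal is correct and takes essentially the same route as the paper: the paper's proof is simply ``apply Lemma \ref{djelovanje-autom}'', implicitly combined with the identification $\sigma_0(U(\lambda))\cong U(-\lambda)$ (hence $\sigma(\widetilde{U(\lambda)})\cong\widetilde{U(-\lambda)}$) and the commutation $\sigma\circ\rho_s=\rho_{-s}\circ\sigma$, which are exactly the details you supply. Your functor-level bookkeeping (twisting by $g$ and shifting spectral flows through it) matches the computation the paper itself carries out when it uses this corollary in the proof of Proposition \ref{ired-weyl-1}.
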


\subsection{More general weight modules}
\label{more-modules}
A  classification of irreducible  weight modules for the Weyl algebra  $\widehat{\mathcal A}$ is given in \cite{FGM}.  Let us describe here a family of weight modules having infinite--dimensional weight spaces.

Take ${\lambda}, \mu \in {\Bbb C} \setminus {\Z}$. Let $$U(\lambda, \mu) = x_1 ^{\lambda} x_2 ^{\mu} {\C} [x_1, x_2, x_1 ^{-1}, x_2 ^{-1}]. $$ Then $U(\lambda, \mu)$ is an irreducible module for the second Weyl algebra $A_2$ generated by $\partial _1, \partial_2, x_1, x_2$. Note that $A_2$ can be realized as a subalgebra of  $\widehat{\mathcal A}$ generated by $\partial_2 = a(1), \partial _1 = a(0),  x_2 = a^*(-1), x_1 = a^*(0)$. Then we have the irreducible $\widehat{\mathcal A}$--module $\widetilde{U( \lambda, \mu)} $ as follows.
Let  $\mathcal B$ be the subalgebra of $\widehat{\mathcal A}$ generated by $a(i), a^*(j)$, $i \ge 0, j \ge -1$. Consider $U(\lambda, \mu)$ as a $\mathcal B$--module such that
$ a(n) $, $a^*(m)$ act trivially for $n \ge 2$, $m \ge 1$. Then by \cite{FGM},
$$\widetilde{U( \lambda, \mu)}  =\widehat{\mathcal A}  \otimes _{\mathcal B} U(\lambda, \mu)$$
is an irreducible $\widehat{\mathcal A}$--module. 
As a vector space:
\bea \widetilde{U( \lambda, \mu)} &\cong &  {\C}[ a (-n-1), a^{*} (-m-2) \ \vert \ n,m  \in {\Z}_{\ge   0}]   \otimes U(\lambda, \mu) \nonumber \\   &\cong&    a^* (0) ^{\lambda} a^*(-1)  ^{\mu} {\C}[ a (-n-1), a^{*} (-m) \ \vert \ n,m  \in {\Z}_{\ge 0} ]. \nonumber \eea

 Since $\widetilde{U( \lambda, \mu)}$ is a restricted   $\widehat{\mathcal A}$--module we get:
\begin{proposition}
$\widetilde{U( \lambda, \mu)}$ is an irreducible weight module for the Weyl vertex algebra $M$.
\end{proposition}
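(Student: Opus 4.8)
The plan is to reduce this statement to the irreducibility result already established for the finite Weyl algebra modules $\widetilde{U(\lambda)}$. The key structural observation is that $\widetilde{U(\lambda,\mu)}$ is built by induction from the module $U(\lambda,\mu)$ over the subalgebra $\mathcal B$, just as $\widetilde{U(\lambda)}$ was built from $U(\lambda)$ over $\widehat{\mathcal A}_{\ge 0}$. The authors have already invoked \cite{FGM} to guarantee that $\widetilde{U(\lambda,\mu)}$ is an irreducible $\widehat{\mathcal A}$--module, so the remaining content is entirely about the \emph{weight} structure, namely that $\beta(0)$ and $L(0)$ act semisimply. Since irreducibility as an $\widehat{\mathcal A}$--module immediately gives irreducibility as an $M$--module (the vertex operators of $M$ are built from the fields $a(z), a^*(z)$), the proposition is really just the assertion that this irreducible module lies in the weight category.

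First I would verify the action of $\beta(0)$. Using the commutation formulas $[\beta(n),a(m)]=-a(n+m)$ and $[\beta(n),a^*(m)]=a^*(n+m)$ together with the explicit vector-space description $\widetilde{U(\lambda,\mu)} \cong a^*(0)^{\lambda} a^*(-1)^{\mu} {\C}[a(-n-1),a^*(-m)\mid n,m\in{\Z}_{\ge 0}]$, the operator $\beta(0)$ acts as a derivation-like charge operator that simply counts the difference between the total $a^*$--degree and the total $a$--degree on each monomial, shifted by the parameters $\lambda$ and $\mu$ coming from the fractional powers. Each such monomial is therefore a $\beta(0)$--eigenvector, so $\beta(0)$ acts diagonally; this is a direct computation on the spanning set. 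The same monomial basis should also diagonalize $L(0)$, using $[L(0),a(m)]=-m a(m)$ and $[L(0),a^*(m)]=-m a^*(m)$: each monomial is an $L(0)$--eigenvector with eigenvalue given by the sum of the conformal weights of its factors.

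The one genuine subtlety I expect is that $\widetilde{U(\lambda,\mu)}$ has \emph{infinite--dimensional} weight spaces, as the subsection title and the surrounding discussion emphasize. This does not obstruct semisimplicity of $\beta(0)$ and $L(0)$ individually, since a direct sum of one--dimensional eigenspaces for each operator is still a semisimple action, but it does mean one cannot appeal to any finiteness of graded pieces. The restriction property noted just before the statement --- that $\widetilde{U(\lambda,\mu)}$ is a restricted $\widehat{\mathcal A}$--module --- is precisely what guarantees the fields $a(z), a^*(z)$ and hence all vertex operators act by well-defined (lower-truncated) formal series, so that $\widetilde{U(\lambda,\mu)}$ is a legitimate $M$--module. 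Thus the main obstacle is bookkeeping: I would carefully exhibit the explicit monomial eigenbasis, confirm that $L(0)$ and $\beta(0)$ are simultaneously diagonalized on it, and cite \cite{FGM} for irreducibility, after which the proof is short.
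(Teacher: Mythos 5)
Your proposal is correct and follows essentially the same route as the paper: the paper's proof consists of citing \cite{FGM} for irreducibility of $\widetilde{U(\lambda,\mu)}$ as an $\widehat{\mathcal A}$--module and then observing that restrictedness makes it a module for the vertex algebra $M$, on which any $M$--submodule is an $\widehat{\mathcal A}$--submodule. Your additional explicit check that the monomials are simultaneous eigenvectors for $\beta(0)$ and $L(0)$ is exactly the bookkeeping the paper leaves implicit (and is consistent with its remark that the weight spaces are infinite--dimensional), so there is nothing to correct.
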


One can see that the weight spaces of the module $\widetilde{U( \lambda, \mu)}$ are all infinite-dimensional with respect to $(\beta(0), L(0))$. In particular,  vectors $$a(-1) ^m a^ * (0) ^{\lambda + 2m   } a^*(-1) ^{\mu- m},  \quad m \in {\Z}_{\ge 0}, $$
are linearly independent and they belong to the same weight space.

\begin{remark}
Note that modules  $\widetilde{U( \lambda, \mu)}$  are not in the category $\mathcal K$, and therefore Proposition  \ref{sc-fusion-1}  can not be applied in this case.   
\end{remark}


\section{The vertex algebra $\Pi(0)$ and the the construction of intertwining operators } \label{lattice} 

In this section we present a bosonic realization of the weight modules for the Weyl vertex algebra. We also construct  intertwining operators using this bosonic realization.

\subsection{The vertex algebra $\Pi(0)$ and its modules} Let $L$ be the lattice
$$ L= {\Z} \alpha + {\Z}\beta, \ \la \alpha , \alpha \ra = - \la \beta , \beta \ra = 1, \quad \la \alpha, \beta \ra = 0, $$
and $V_L = M_{\alpha, \beta} (1) \otimes {\C} [L]$ the associated lattice vertex superalgebra, where $M_{\alpha, \beta}(1) $ is the   Heisenberg  vertex algebra generated by fields $\alpha(z)$ and $\beta(z)$ and ${\C}[L]$ is the group algebra of $L$.
 We have its vertex subalgebra
$$ \Pi (0) = M_{\alpha, \beta} (1) \otimes {\C} [\Z (\alpha + \beta) ] \subset V_L. $$

 There is an injective vertex algebra homomorphism  $f  : M \rightarrow  \Pi(0)$ such that
$$ f(a) = e^{\alpha + \beta}, \ f(a^{*}) = -\alpha(-1) e^{-\alpha-\beta}. $$
We identify $a, a^*$ with their image in $\Pi(0)$. We have (cf. \cite{efren})
 $$ M \cong  \mbox{Ker}_{\Pi(0)} e ^{\alpha}_0.$$

The Virasoro vector $\omega$ is mapped to 
$$\omega  = a(-1) a^* (-1) {\bf 1} =  \frac{1}{2} (\alpha (-1) ^2 - \alpha(-2) - \beta(-1) ^2 + \beta(-2) ) {\bf 1}. $$
Note also that
$$ g(\omega) = - a(-2) a^* = \omega_{\mu}  =   \frac{1}{2} (\alpha (-1) ^2 - \alpha(-2) - \beta(-1) ^2 -  \beta(-2) ) {\bf 1}. \quad (\mu =1). $$



Since 
\bea && \Pi(0) = {\Bbb C}[\Z (\alpha+\beta)] \otimes M_{\alpha, \beta} (1) \label{def-pi0} \eea
is a vertex subalgebra of $V_L$, for every $\lambda \in {\C}$ and $r \in {\Z}$,
$$\Pi _r (\lambda) ={\Bbb C}[r \beta + (\Z + \lambda )(\alpha+\beta)] \otimes M_{\alpha, \beta} (1) = \Pi(0). e^{r \beta + \lambda (\alpha + \beta)}  $$
is an irreducible  $\Pi(0)$--module.

We have
\bea  L (0) e^{ r  \beta + (n + \lambda )(\alpha+\beta) } 
&=&  \frac{1-r}{2}  ( r  + 2   (n + \lambda))   e^{ r  \beta + (n + \lambda )(\alpha+\beta) },   \nonumber \eea
and for $\mu = 1$
\bea  L^{\mu}  (0) e^{ r  \beta + (n + \lambda )(\alpha+\beta) } 
&=&  \frac{1}{2} r  ( 1+  r  + 2   (n + \lambda))   e^{ r  \beta + (n + \lambda )(\alpha+\beta) }  \nonumber \eea

\begin{proposition}   \label{ired-weyl-1}Assume that $\ell  \in {\Z}$, $\lambda \in {\C} \setminus {\Z}$.  Then as $M$--modules:
\begin{itemize}
\item[(1)] $\Pi_{\ell}( \lambda) \cong    \rho_{-\ell +1} ( \widetilde{U(-\lambda)})$,
\item[(2)] $\Pi_{\ell}( \lambda)^g \cong    \rho_{\ell} ( \widetilde{U(\lambda)})$.
\end{itemize}
\end{proposition}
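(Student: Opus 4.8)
The plan is to identify $\Pi_{\ell}(\lambda)$ as an $M$-module by first handling a base case and then transporting it under the spectral flow $\rho_s$ and the order-four automorphism $g$, reducing everything to already-established isomorphisms and to Corollary \ref{ired-relaxed}.

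\emph{Part (1).} First I would treat the base case $\ell = 1$. Here $\Pi_1(\lambda) = \Pi(0).e^{\beta + \lambda(\alpha+\beta)}$, and I would identify it with $\widetilde{U(-\lambda)}$ directly. Under the embedding $f: M \to \Pi(0)$ with $f(a) = e^{\alpha+\beta}$ and $f(a^*) = -\alpha(-1)e^{-\alpha-\beta}$, the operators $a(0)$ and $a^*(0)$ act on the top component of $\Pi_1(\lambda)$, and I would compute their action on the vectors $e^{\beta + (n+\lambda)(\alpha+\beta)}$, $n \in \Z$, checking that these reproduce the $A_1$-module $U(-\lambda)$ on the top level. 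The weight grading computation is supplied by the displayed $L(0)$-eigenvalue formula: for $r=1$ one gets $L(0)e^{\beta+(n+\lambda)(\alpha+\beta)} = 0$, confirming that $\Pi_1(\lambda)$ is a $\Z_{\ge 0}$-graded $M$-module with top component at conformal weight $0$, matching $\widetilde{U(-\lambda)}$. Since both modules are irreducible weight $M$-modules with the same irreducible $A_1$ top component, Lemma \ref{classification-zhu} and the uniqueness of the induced module force $\Pi_1(\lambda) \cong \widetilde{U(-\lambda)}$.

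\emph{Transport by spectral flow.} Next I would show that the $M$-module structure on $\Pi_\ell(\lambda)$ for general $\ell$ is obtained from the $\ell=1$ case by applying $\rho_{-\ell+1}$. The key observation is that multiplication by $e^{(\ell-1)\beta}$ (equivalently, shifting $r$ from $1$ to $\ell$) implements precisely the spectral flow: conjugating the fields $a(z) = e^{\alpha+\beta}$ and $a^*(z) = -\alpha(-1)e^{-\alpha-\beta}$ by the operator that shifts the $\beta$-charge reproduces the mode shift $a(n)\mapsto a(n+s)$, $a^*(n)\mapsto a^*(n-s)$ defining $\rho_s$ in \eqref{sepctral-flow-def}. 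I would verify this by comparing the action of the Heisenberg mode $\beta(0)$, whose eigenvalue on $e^{r\beta+\cdots}$ distinguishes the sheets, with the definition of $\rho_s$. This yields $\Pi_\ell(\lambda) \cong \rho_{-\ell+1}(\Pi_1(\lambda)) \cong \rho_{-\ell+1}(\widetilde{U(-\lambda)})$, giving (1).

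\emph{Part (2).} For the second isomorphism I would apply the automorphism $g$ to part (1) and invoke Corollary \ref{ired-relaxed}. Applying $g$ to $\Pi_\ell(\lambda) \cong \rho_{-\ell+1}(\widetilde{U(-\lambda)})$ gives $\Pi_\ell(\lambda)^g \cong \left(\rho_{-\ell+1}(\widetilde{U(-\lambda)})\right)^g$. The second isomorphism of Corollary \ref{ired-relaxed}, with $\lambda$ replaced by $-\lambda$ and $s = \ell$, reads $\left(\rho_{-\ell+1}(\widetilde{U(-\lambda)})\right)^g \cong \rho_{\ell}(\widetilde{U(\lambda)})$, which is exactly the claim. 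Thus (2) follows formally once (1) is in hand.

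\emph{Main obstacle.} I expect the genuine work to be concentrated in the base case and in pinning down the spectral-flow transport: namely verifying carefully that the bosonic vertex operators $e^{\alpha+\beta}$ and $-\alpha(-1)e^{-\alpha-\beta}$, acting on the charged sectors $\C[r\beta+(\Z+\lambda)(\alpha+\beta)]\otimes M_{\alpha,\beta}(1)$, have modes matching the Weyl generators $a(n), a^*(n)$ and that the $\beta$-charge shift is literally $\rho_s$. The commutation relations among $\alpha, \beta$ (with $\langle\alpha,\alpha\rangle = -\langle\beta,\beta\rangle = 1$) and the normal-ordering constants in the lattice vertex operators must be tracked precisely to get the mode indices right; once the dictionary between $e^{r\beta+\cdots}$ and the $\rho_s$-twist is confirmed, the rest is a formal application of Lemma \ref{djelovanje-autom} and Corollary \ref{ired-relaxed}.
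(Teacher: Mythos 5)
Your proposal is correct and takes essentially the same route as the paper: the base case $\ell=1$ is settled by identifying the top component ${\C}[\beta+({\Z}+\lambda)(\alpha+\beta)]\cong U(-\lambda)$ and invoking Lemma \ref{classification-zhu}, general $\ell$ is obtained by the charge shift $e^{(\ell-1)\beta}$ realizing $\rho_{-\ell+1}$, and part (2) follows by applying $g$ together with Corollary \ref{ired-relaxed} (the paper re-derives that corollary's identity via $\rho_1\sigma\rho_{-\ell+1}=\rho_\ell\sigma$, which is the same computation). The extra verifications you flag (the mode dictionary for $e^{\alpha+\beta}$, $-\alpha(-1)e^{-\alpha-\beta}$ and the $\beta$-charge shift) are exactly the details the paper leaves implicit, so there is no gap beyond what the paper itself assumes.
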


\begin{proof}
Assume first that $r=1$. Then $\Pi _1 (-\lambda)$ is a $\Zp$--graded $M$--module whose lowest component is
 $$ \Pi _1 (-\lambda) (0) \cong {\Bbb C}[\beta + (\Z - \lambda )(\alpha+\beta)] \cong U(\lambda).  $$ Now Lemma \ref{classification-zhu} and  Corollary  \ref{ired-relaxed} imply that $\Pi _1 (-\lambda)$ is an irreducible $M$--module isomorphic to  $\widetilde{U(\lambda)}$.
  Modules $\Pi _{\ell}  (-\lambda)$ can by obtained from $\Pi _1 (-\lambda)$ by applying the spectral flow automorphism $\rho_{-\ell+1}:=e ^{(\ell-1)\beta}$.
Using Corollary  \ref{ired-relaxed} we get
  \bea \Pi_{\ell}( \lambda)^g  &= & \left( \rho_{-\ell+1} ( \widetilde{U(-\lambda)} )\right)^g  =   \rho_1 \sigma   \rho_{-\ell +1}  ( \widetilde{U(- \lambda)} ) \nonumber \\ &=&   \rho_{\ell}   \sigma  ( \widetilde{U(-\lambda)}) 
  =    \rho_{\ell} ( \widetilde{U(\lambda)}). \nonumber \eea
  The proof follows.
  \end{proof}

\subsection{Construction of intertwining operators}
\begin{proposition} \label{konstrukcija-int-op}
For every $\ell _1, \ell _2 \in {\Z}$ and $\lambda , \mu  \in {\C}$ there exist  non-trivial  intertwining operators of types
 \bea {\rho_{\ell _1 + \ell _2-1 } ( \widetilde{U(\lambda  + \mu  )})  \choose  \rho _{\ell _1} ( \widetilde{U(\lambda )})    \ \ \rho_{\ell_2 } ( \widetilde{U(\mu )})  }, \quad 
{\rho_{\ell _1 + \ell _2  } ( \widetilde{U(\lambda  + \mu  )})  \choose  \rho _{\ell _1} ( \widetilde{U(\lambda )})    \ \ \rho_{\ell_2 } ( \widetilde{U(\mu )})  } \label{int-op} \eea
in the category of weight $M$--modules.
\end{proposition}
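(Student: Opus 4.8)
My plan is to obtain both families of intertwining operators by restricting a single kind of intertwining operator living on the lattice vertex algebra $\Pi(0)$ along the \emph{two} embeddings of $M$ into $\Pi(0)$ at our disposal, namely $f$ and $f\circ g$, and then to rewrite the resulting types through the module identifications of Proposition \ref{ired-weyl-1}. The whole construction thus reduces to (i) knowing the fusion of the Fock--type $\Pi(0)$--modules $\Pi_r(\nu)$, and (ii) organizing the relabeling so that the prescribed input modules $\rho_{\ell_1}(\widetilde{U(\lambda)})$ and $\rho_{\ell_2}(\widetilde{U(\mu)})$ appear in both types.

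For (i), I would first record that, since $\langle\alpha+\beta,\alpha+\beta\rangle=0$, the element $\alpha+\beta$ is isotropic and $\Pi_r(\nu)=\Pi(0).e^{r\beta+\nu(\alpha+\beta)}$ are the standard Fock modules of the half--lattice vertex algebra $\Pi(0)$. For all $r_1,r_2\in\Z$ and $\nu_1,\nu_2\in\C$ there should be a non--trivial intertwining operator
$$\mathcal I\in I{\Pi_{r_1+r_2}(\nu_1+\nu_2)\choose \Pi_{r_1}(\nu_1)\ \ \Pi_{r_2}(\nu_2)},$$
inherited from the vertex--operator action of the ambient lattice vertex superalgebra $V_L$: writing $\gamma_i=r_i\beta+\nu_i(\alpha+\beta)$, it is the operator $\mathcal I(e^{\gamma_1},z)=Y(e^{\gamma_1},z)$ interpreted on the cosets (with the usual normal--ordered exponentials, cocycle, and monodromy factor $z^{\langle\gamma_1,\cdot\rangle}$), extended to all of $\Pi_{r_1}(\nu_1)$ by the commutator formula. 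Its leading coefficient is non--zero, so $\mathcal I\neq 0$, and the additivity of the coset labels $r_1+r_2,\ \nu_1+\nu_2$ is dictated by the zero modes of $\alpha,\beta$.

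With $\mathcal I$ in hand I would produce the two types by two specializations (here $(\lambda_1,\lambda_2)=(\lambda,\mu)$). For the first type I restrict $\mathcal I$ along $f$; this is again an $M$--intertwining operator because $f$ preserves the translation operator $D=L(-1)$. Taking $(r_1,\nu_1)=(1-\ell_1,-\lambda)$ and $(r_2,\nu_2)=(1-\ell_2,-\mu)$, Proposition \ref{ired-weyl-1}(1) gives $\Pi_{1-\ell_i}(-\lambda_i)\cong\rho_{\ell_i}(\widetilde{U(\lambda_i)})$ for the inputs and $\Pi_{2-\ell_1-\ell_2}(-\lambda-\mu)\cong\rho_{\ell_1+\ell_2-1}(\widetilde{U(\lambda+\mu)})$ for the output, which is exactly the first type in (\ref{int-op}). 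For the second type I instead take $(r_1,\nu_1)=(\ell_1,\lambda)$, $(r_2,\nu_2)=(\ell_2,\mu)$ and restrict $\mathcal I$ along $f\circ g$; since $g(\omega)=\omega-\beta(-2){\bf 1}=\omega-D\beta$, the hypothesis (\ref{uvjet-autom}) of Proposition \ref{auto} holds, so this $g$--twist is again an $M$--intertwining operator. Then Proposition \ref{ired-weyl-1}(2) gives $\Pi_{\ell_i}(\lambda_i)^g\cong\rho_{\ell_i}(\widetilde{U(\lambda_i)})$ and $\Pi_{\ell_1+\ell_2}(\lambda+\mu)^g\cong\rho_{\ell_1+\ell_2}(\widetilde{U(\lambda+\mu)})$, yielding the second type. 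Neither step uses irreducibility, so arbitrary $\lambda,\mu\in\C$ are allowed, as stated.

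The main obstacle is input (i): one must verify that the vertex--operator formula genuinely satisfies the intertwining Jacobi identity for the $\Pi(0)$--modules $\Pi_r(\nu)$ in this indefinite--signature, continuous--parameter ($\nu\notin\Z$) situation, i.e. that $\Pi(0)$--fusion is governed by addition of the labels $(r,\nu)$. This is the standard but somewhat delicate (half--)lattice vertex--operator computation; once it is established, the passage to both $M$--intertwining operators consists of purely formal relabelings via Propositions \ref{auto} and \ref{ired-weyl-1}.
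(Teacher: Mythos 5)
Your proposal is correct and follows essentially the same route as the paper: both construct the standard lattice intertwining operator of type $\binom{\Pi_{r_1+r_2}(\nu_1+\nu_2)}{\Pi_{r_1}(\nu_1)\ \Pi_{r_2}(\nu_2)}$ (the paper cites \cite{DL} for this), restrict it along the embedding $f$ and relabel via Proposition \ref{ired-weyl-1}(1) to get the first type, and then apply the $g$--twist of Proposition \ref{auto} (using $g(\omega)=\omega-D\beta$) together with Proposition \ref{ired-weyl-1}(2) to get the second type. The only cosmetic difference is that you make the hypothesis check $\omega-g(\omega)\in\mathrm{Im}(D)$ explicit inside the proof, which the paper defers to a remark.
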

\begin{proof}
 By using explicit bosonic realization,  as in \cite{DL},   one can  construct a  unique nontrivial intertwining operator $I(\cdot, z)$ of type 
   \bea  &&{\Pi_{s_1 + s_2} (\lambda_1 + \lambda_2) \choose  \Pi _{s_1}  (\lambda_1)  \ \ \Pi_{s_2} (\lambda_2) } \label{int-pi} \eea  
   in the category of $\Pi(0)$--modules such that
$$ e^{ s_1 \beta + \lambda_1 (\alpha+ \beta)} _{\nu} e^{ s_2 \beta + \lambda_2 (\alpha+ \beta)} = e^{ (s_1+ s_2) \beta + (\lambda_1 +  \lambda_2)  (\alpha+ \beta)} \quad (\nu \in {\C}). $$
By restriction, this gives a non-trivial intertwining operator in the category of weight $M$--modules.
Taking the embedding $f : M \rightarrow \Pi(0)$ and applying Corollary \ref{ired-weyl-1},  we conclude the operator (\ref{int-pi}) gives the  intertwining operator of type
$$ {\rho_{-s_1 - s_2+1 } ( \widetilde{U(-\lambda_1 - \lambda_2)})  \choose  \rho _{-s_1+1} ( \widetilde{U(-\lambda_1)})    \ \ \rho_{-s_2+1} ( \widetilde{U(-\lambda_2)})  }, $$
which for $\ell_1 = -s_1 +1$, $\ell_2 = -s_2+1$, $\lambda= -\lambda_1$, $\mu = -\lambda_2$ gives the  first intertwining operator.
By using   action of the automorphism $g$ and Corollary  \ref{ired-weyl-1} we get the following intertwining operator
$$ {\rho_{s_1 + s_2 } ( \widetilde{U(\lambda_1 + \lambda_2)})  \choose  \rho _{s_1} ( \widetilde{U(\lambda_1)})    \ \ \rho_{s_2} ( \widetilde{U(\lambda_2)})  }, $$
which for $\ell_1 = s_1$, $\ell_2 = s_2$, $\lambda= \lambda_1$, $\mu = \lambda_2$ gives the  second intertwining operator.
\end{proof} 
 \begin{remark} Intertwining operators in  Proposition \ref{konstrukcija-int-op} are realized on irreducible $\Pi(0)$--modules. This result can be read as
 $$ \Pi _{\ell _1}  (\lambda) \times \Pi_{\ell_2} (\mu)  \supseteq  \Pi_{\ell _1 + \ell_2-1} (\lambda  + \mu) + \Pi_{\ell _1 + \ell_2} (\lambda  + \mu). $$
 
In the category of $M$--modules, we have non-trivial intertwining operators 
  \bea  &&{\Pi_{\ell _1 + \ell_2-1} (\lambda  + \mu ) \choose  \Pi _{\ell _1}  (\lambda)  \ \ \Pi_{\ell_2} (\mu) },  \label{int-pi-2} \eea 
  which are not  $\Pi(0)$--intertwining operators. 
 \end{remark}
 
 \begin{remark}
 Note that $(M, Y, \textbf{1})$ is a conformal vertex algebra with the conformal vector 
 \[\omega = a(-1)a^{*}(-1)\textbf{1}.\]
 Note that the intertwining operators (\ref{int-op}) satisfy the $L(-1)$-derivative property. Intertwining operators (\ref{int-pi}) satisfy this property by using the lattice realization as before, and intertwining operators (\ref{int-pi-2}) satisfy it by    Proposition \ref{auto}, using the facts that $g(\omega) = \omega_1$ and $L^{\mu}(-1) = L(-1)$, for $\mu = 1$. Moreover,  using  relation (\ref{der-mu}) we see that the $L^{\mu} (-1)$--derivation property holds for every $\mu \in \mathbb{C} $, for all intertwining operators constructed above.
 \end{remark}

\section{ The vertex algebra $V_1 (gl(1 \vert 1))$ and its modules}

   \subsection{On the vertex algebra $V_1(gl(1 \vert 1))$.}
We now recall some results on the representation theory of $gl(1 \vert 1)$ and $\widehat{ gl(1\vert 1)}$. The terminology follows \cite[Section 5]{CR1}.

Let  $\g = gl(1 \vert 1)$  be the complex Lie superalgebra generated by two even elements $E$, $N$  and two odd elements $\Psi^{\pm}$ with the following  (super)commutation relations:
$$ [\Psi^+, \Psi^{-}] = E, \ [E, \Psi^{\pm}] = [E, N] = 0, \ [N, \Psi^{\pm}] = \pm \Psi^{\pm}. $$
Other (super)commutators are trivial.
Let $ (\cdot, \cdot)$ be the invariant  super-symmetric invariant bilinear form such that
$$ ( \Psi^+, \Psi^-) = ( \Psi^-, \Psi^+) =  1,  \ (N, E) = (E, N) = 1.$$
All other products are zero.

Let $\hg = \widehat{gl(1 \vert 1)}= {\g} \otimes {\C}[t,t^{-1}] + {\C} K$  be the associated  affine Lie superalgebra  with  the   commutation relations
$$ [x (n), y(m)] = [x, y](n+m) + n \delta_{n+m,0} K, $$ 
where $K$ is central and  for $x \in {\g}$  we set $x(n) = x \otimes t^n$.
Let $V_k(\g)$ be the associated  simple  affine vertex algebra of level $k$.

Let $\mathcal{V} _{r,s}$ be the Verma module for the Lie superalgebra $\g$ generated by the vector $v_{r,s}$ such that $ N v_{r,s} = r v_{r, s}$, $ E v_{r,s} = s v_{r,s}$. This module is a $2$--dimensional module  and it is irreducible iff $s \ne 0$. If $s= 0$, $\mathcal{V} _{r,s}$ has a $1$--dimensional irreducible quotient, which we denote by  $\mathcal{A}_{r}$.

We will need the following tensor product decompositions:
\bea  && \mathcal{A}_{r_1}  \otimes \mathcal{A}_{r_2} = \mathcal{A}_{r_1+ r_2} , \quad  \mathcal{A}_{r_1} \otimes \mathcal{V} _{r_2,s_2} = \mathcal{V} _{r_1 + r_2 , s_2}, \label{dek-1} \\
&&  \mathcal{V} _{r_1,s_1} \otimes \mathcal{V} _{r_2,s_2} = \mathcal{V} _{r_1 + r_2  ,s_1+s_2} \oplus \mathcal{V} _{r_1+r_2 - 1 ,s_1 + s_2} \quad (s_1 + s_2 \ne 0), \label{dek-2} \\
&&  \mathcal{V} _{r_1,s_1} \otimes \mathcal{V} _{r_2,-s_1} = \mathcal{P} _{r_1 + r_2 }, \label{dek-3} \eea
where 
$\mathcal{P}_r$ is the $4$--dimensional  indecomposable module which appears in the following extension
$$ 0 \rightarrow \mathcal{V} _{r  ,0} \rightarrow \mathcal{P}_r \rightarrow  \mathcal{V} _{r -1,0} \rightarrow 0. $$
Let $\widehat {\mathcal{V} } _{r,s}$ denote the Verma module of level $1$ induced from the irreducible $gl(1 \vert 1)$--module $\mathcal{V} _{r ,s }$. If $s\notin \Z$, then  $\widehat {\mathcal{V} } _{r,s}$ is an irreducible $V_1 (gl(1 \vert 1))$--module. If $s\in {\Z}$, $\widehat {\mathcal{V} } _{r,s}$ is reducible and its structure is described in \cite[Section 5.3]{CR1}.

By using the tensor product decomposition (\ref{dek-2})  we get  the following  result on fusion rules   for  $V_1( \g)$--modules:

\begin{proposition} \label{non-gl11}
Let  $r_1, r_2, s_1, s_2 \in {\C}$, $s_1, s_2, s_1 + s_2 \notin {\Z}$. Assume that there is a non-trivial intertwining operator 
$$ {  \widehat {\mathcal{V} } _{r_3,s_3} 
\choose \widehat {\mathcal{V} } _{r_1,s_1} \ \   \widehat {\mathcal{V} } _{r_2,s_2}  } $$
in the category of $V_1(\g)$--modules. Then $s_3 = s_1 + s_2$ and $r_3 = r_1 + r_2$, or $r_3 = r_1 + r_2 -1$.
\end{proposition}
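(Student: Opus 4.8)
The plan is to descend from the intertwining operator to the zero modes of $\g=gl(1\vert 1)$ acting on the top components of the three modules, and then to read off the constraints from the tensor product decomposition (\ref{dek-2}). Throughout write $M_1=\widehat{\mathcal V}_{r_1,s_1}$, $M_2=\widehat{\mathcal V}_{r_2,s_2}$, $M_3=\widehat{\mathcal V}_{r_3,s_3}$, and let $I$ be the given non-trivial intertwining operator. First I would record the commutator of a horizontal mode with $I$. Taking $a=x(-1)\mathbf 1$ for $x\in\g$ in the Jacobi identity for intertwining operators and extracting the coefficient of $z_0^{0}$, only the $i=0$ term of the general commutator formula survives (since $\binom{0}{i}=0$ for $i\ge 1$), giving
\[ [x(0),I(b,z)]=I(x(0)b,z),\qquad x\in\g,\ b\in M_1. \]
Now $E$ is central in $\g$, so $E(0)$ is central in $\hg$ and hence acts by a scalar on each irreducible module; on $\widehat{\mathcal V}_{r_i,s_i}$ that scalar is $s_i$. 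Applying the commutator with $x=E$ to a product $b^I_{(\nu)}c$, with $c\in M_2$, yields $E(0)\,b^I_{(\nu)}c=(s_1+s_2)\,b^I_{(\nu)}c$. Since $I\ne 0$ there is some nonzero such product, on which $E(0)$ also acts by $s_3$; therefore $s_3=s_1+s_2$. In particular $s_3\notin\Z$, so $M_3$ is irreducible, and (\ref{dek-2}) applies to the pair $(s_1,s_2)$.

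Next I would pass to the top components. For $u_1$ in the top $\mathcal V_{r_1,s_1}$ of $M_1$ and $u_2$ in the top $\mathcal V_{r_2,s_2}$ of $M_2$, each coefficient $(u_1)^I_{(\nu)}u_2$ lies in a single $L(0)$--eigenspace of $M_3$ (as $u_1,u_2$ are $L(0)$--homogeneous), and the displayed commutator formula says exactly that $u_1\otimes u_2\mapsto (u_1)^I_{(\nu)}u_2$ is a homomorphism of $\g$--modules, where $\g$ acts on the source through the tensor product action and on the target through the zero modes (which preserve the eigenspace since $[L(0),x(0)]=0$). As $I\ne 0$, at least one such map is nonzero, so we obtain a nonzero $\g$--homomorphism from $\mathcal V_{r_1,s_1}\otimes\mathcal V_{r_2,s_2}$ into $M_3$. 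Because $s_1+s_2\ne 0$, the decomposition (\ref{dek-2}) gives
\[ \mathcal V_{r_1,s_1}\otimes\mathcal V_{r_2,s_2}=\mathcal V_{r_1+r_2,\,s_1+s_2}\oplus\mathcal V_{r_1+r_2-1,\,s_1+s_2}, \]
with both summands irreducible, so the image of this homomorphism is one of them or their sum.

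The remaining task is to identify this $\g$--submodule with the \emph{top} component $\mathcal V_{r_3,s_3}$ of $M_3$, which forces $r_3\in\{r_1+r_2,\,r_1+r_2-1\}$, and this last step is the main obstacle. The difficulty is that, as a $\g$--module, $M_3$ is the symmetric algebra on the negative modes $\g\otimes t^{-1}\C[t^{-1}]$ tensored with $\mathcal V_{r_3,s_3}$; since the modes $\Psi^{\pm}(-n)$ carry $N$--weight $\pm1$ while $E(0)$ acts by $s_3$ throughout, one computes using (\ref{dek-1}) that the $\g$--composition factors of $M_3$ are exactly $\mathcal V_{r_3+j,\,s_3}$ for various $j\in\Z$. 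Thus crude weight bookkeeping shows only $r_3\equiv r_1+r_2\pmod{\Z}$, and to single out the two admissible values one must know that the homomorphism produced above detects the top component of $M_3$, i.e. that the leading (lowest conformal weight) coefficient of $I$ is already nonzero on the top components. I would secure this by appealing to the Frenkel--Zhu / Zhu bimodule description of intertwining operators for affine vertex algebras: because $A(V_1(\g))$ is a quotient of $U(\g)$ and each $M_i$ is generated by its top, the space $I{\widehat{\mathcal V}_{r_3,s_3}\choose \widehat{\mathcal V}_{r_1,s_1}\ \widehat{\mathcal V}_{r_2,s_2}}$ injects into a Hom-space of top components which, by this structure, reduces to $\mathrm{Hom}_{\g}\!\left(\mathcal V_{r_1,s_1}\otimes\mathcal V_{r_2,s_2},\,\mathcal V_{r_3,s_3}\right)$, the image of $I$ being its top-level part. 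Non-triviality of $I$ then makes this Hom-space nonzero, so $\mathcal V_{r_3,s_3}$ is isomorphic to a summand in the displayed decomposition; hence $s_3=s_1+s_2$ and $r_3=r_1+r_2$ or $r_3=r_1+r_2-1$, as claimed.
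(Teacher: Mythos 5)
Your proof is correct and takes essentially the same approach the paper intends: the paper gives no separate argument for Proposition \ref{non-gl11}, presenting it as a direct consequence of the tensor product decomposition (\ref{dek-2}) via the standard reduction of intertwining operators to top components. Your write-up (zero-mode commutator formula, centrality of $E(0)$ to get $s_3=s_1+s_2$, and the Frenkel--Zhu/Zhu-bimodule injectivity for irreducible modules to ensure the induced $\g$--homomorphism $\mathcal{V}_{r_1,s_1}\otimes\mathcal{V}_{r_2,s_2}\to\mathcal{V}_{r_3,s_3}$ on top levels is nonzero) is precisely the fleshed-out version of that derivation, correctly identifying and closing the only delicate point.
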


 Recall that the  Clifford algebra is generated by $\psi(r), \psi^{*}(s)$, where $r, s \in \mathbb{Z} + \frac{1}{2}$, with relations 
\begin{gather}
\begin{aligned}
& [\psi(r), \psi^{*}(s)] = \delta_{r+s, 0}, \label{rel}\\
& [\psi(r), \psi(s)] = [\psi^{*}(r), \psi^{*}(s)] = 0, \text{ for all } r, s.
\end{aligned}
\end{gather}
Note that the commutators (\ref{rel}) are actually anticommutators because both $\psi(r)$ and $\psi^{*}(s)$ are odd for every $r$ and $s$.

The Clifford vertex algebra $F$  is generated by the fields 
\begin{align*}
& \psi(z) = \sum_{n \in \mathbb{Z}} \psi(n+\frac{1}{2})z^{-n-1}, \\
& \psi^{*}(z) = \sum_{n \in \mathbb{Z}} \psi^{*}(n+\frac{1}{2})z^{-n-1}.
\end{align*}
As a vector space, \[F \cong \bigwedge\big(\big\{\psi(r), \psi^{*}(s)\ \big\vert\ r,s < 0\big\}\big)\]



 Let $ V_{\Z \gamma}$ be the lattice vertex algebra associated to the lattice $\Z \gamma\cong \Z $, $\langle \gamma, \gamma \rangle = 1$. By using the  boson--fermion correspondence,  we have that  $F \cong V_{\Z \gamma}$,  and we can identify  the generators of the Clifford vertex algebra as follows (cf. \cite{K2}):
\[ \psi:= e^{\gamma}, \psi^{*} =  e^{-\gamma}\].

Now we define the following vertex superalgebra:
$$ S\Pi (0) = \Pi (0) \otimes F \subset V_L,$$
and its irreducible modules
$$ S\Pi _r(\lambda) = \Pi_r (\lambda) \otimes F = S\Pi(0). e^{r \beta + \lambda (\alpha + \beta)}. $$

  Let $\mathcal U = M \otimes F$.  Using \cite[Section 5.8]{K2} we define the vectors  \bea
&&  \Psi ^+ := e ^{\alpha + \beta + \gamma} = a(-1) \psi, \ \Psi^- :=- \alpha(-1) e ^{-\alpha - \beta - \gamma} =  a^* (0) \psi ^*, \nonumber \\ &&  E := \gamma + \beta, \
N:= \frac{1}{2} (\gamma - \beta). \nonumber \eea
Then the components of the fields 
$$ X  (z) = Y(X , z) = \sum_{n \in {\Z}} X (n) z^{-n-1}, \ \ X\in \{ \Psi^+, \Psi^-, E, N\}$$
satisfy the commutation relations for the affine Lie algebra $\hg = \widehat{gl(1 \vert 1)}$, so that $M \otimes F$ is a $\hg$--module of level $1$.
(See also \cite{A-2017} for a realization of $\widehat{gl(1 \vert 1)}$ at the critical level).

The Sugawara conformal vector is 
\bea  \ \omega_{c=0} &= & \tfrac{1}{2} ( N (-1) E (-1) + E (-1) N (-1) - \Psi^+ (-1) \Psi^- (-1)  + \nonumber   \\ && \quad  \Psi^- (-1) \Psi^+  (-1)  + E(-1) ^2 ) {\bf 1}\label{expression-omega-1}\   \\
&=&   \tfrac{1}{2}  (\beta(-1) + \gamma(-1)) (\gamma(-1) - \beta(-1))  +  \alpha(-1) ( \alpha (-1) + \beta(-1) + \gamma(-1))  \nonumber \\
& &   - \tfrac{1}{2} ( ( \alpha (-1) + \beta(-1) + \gamma(-1) )^2 + (\alpha(-2) + \beta(-2) + \gamma(-2)) )  \nonumber \\
& & + \tfrac{1}{2}  (\beta (-1)  + \gamma(-1) ) ^2  + \tfrac{1}{2} (\beta (-2) + \gamma(-2) ) \nonumber \\
&= &\tfrac{1}{2} (\alpha(-1) ^2 -  \alpha(-2)   -\beta (-1) ^2 + \gamma(-1) ^2  ) \nonumber   \\
&=&\omega_{c=-1} + \tfrac{1}{2} \gamma(-1) ^2 \quad (\omega_{c=-1} = \omega_{1/2}) \nonumber  \eea

\subsection{Construction of irreducible  $V_1 (\g)$--modules from irreducible $M$--modules.}

Let $V_1 (\g)$  be  the simple affine vertex algebra of level $1$ associated to $\g$.

 We have the following gradation:
 $$ \mathcal U = \bigoplus \mathcal U ^{\ell}, \quad E(0) \vert _{ \mathcal U ^{\ell} } = \ell  \ \mbox{Id}. $$
 We will present an alternative  proof of the following  result:

\begin{proposition}  \label{simpl-gl11} \cite{K2}
We have:
$$ V_1 (\g) \cong \mathcal U^0 =\mbox{Ker}_{M \otimes F}  E(0) . $$
\end{proposition}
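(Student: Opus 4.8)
The plan is to exhibit $V_1(\g)$ as the image of the universal affine vertex algebra $V^1(\g)$ inside $\mathcal U = M\otimes F$, and then to prove that this image exhausts the charge-zero subalgebra $\mathcal U^0$. The currents $\Psi^{\pm}, E, N$ make $\mathcal U$ a level-$1$ $\hg$-module, so there is a vertex-algebra homomorphism $\Phi\colon V^1(\g)\to\mathcal U$ whose image $W$ is the subalgebra generated by these four fields (recall $V_1(\g)$ is the simple quotient of $V^1(\g)$). First I would record the easy inclusion $W\subseteq\mathcal U^0$. The zero mode of any vector is a derivation of all $n$-th products, so $E(0)$ is a derivation of $\mathcal U$ and $\mathcal U=\bigoplus_{\ell\in\Z}\mathcal U^{\ell}$ is a vertex-algebra $\Z$-grading with $\mathcal U^0$ a vertex subalgebra. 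Here $\langle E,E\rangle=\langle\beta,\beta\rangle+\langle\gamma,\gamma\rangle=-1+1=0$, reflecting that $E$ is the isotropic centre of $gl(1\vert 1)$, so $E(0)$ is semisimple with integer eigenvalues. From $[\beta(0),a(m)]=-a(m)$, $[\beta(0),a^{*}(m)]=a^{*}(m)$ and the fermion-number grading of $F$, the generators $a,a^{*},\psi,\psi^{*}$ carry $E(0)$-charges $-1,+1,+1,-1$; hence $\Psi^{+}=a(-1)\psi$, $\Psi^{-}=a^{*}(0)\psi^{*}$, $E$ and $N$ all have charge $0$, giving $W\subseteq\mathcal U^0$.

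The main step, and the one I expect to be hardest, is the reverse inclusion $\mathcal U^0\subseteq W$: the quadratic currents must generate the whole charge-zero subalgebra. I would prove this by comparing bigraded dimensions. On the free-field side the character of $\mathcal U=M\otimes F$ (graded by conformal weight through $q$ and by $E(0)$-charge through $w$) is an explicit product over the modes of $a,a^{*},\psi,\psi^{*}$, and $\mathrm{ch}\,\mathcal U^0$ is its $w^{0}$-coefficient, a constant-term extraction. On the affine side, I would compute $\mathrm{ch}\,W$ from the PBW basis of $V^1(\g)$ in $E(-n),N(-n),\Psi^{\pm}(-n)$ ($n\ge 1$), accounting for the null vectors present at level $1$, and then show that the two series coincide. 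Geometrically this is a first-fundamental-theorem statement for the $\C^{\times}$-action defined by $E(0)$: note that $E$ and $N$ already recover both Heisenberg fields, $\beta=\tfrac{1}{2}(E-2N)$ and $\gamma=\tfrac{1}{2}(E+2N)$, so $W$ contains $\beta,\gamma,\Psi^{\pm}$, and the content is that the products $\Psi^{+}_{(k)}\Psi^{-}_{(l)}$ supply enough of the $a,a^{*},\psi,\psi^{*}$ content to reconstruct an arbitrary charge-balanced normally ordered monomial. Establishing this generation/character identity is the principal obstacle.

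Finally I would identify the quotient $W=\mathcal U^0$ with the simple vertex algebra. Since $M$ and $F$ are simple, $\mathcal U=M\otimes F$ is a simple vertex superalgebra, and its $\Z$-grading is by the zero mode of the Heisenberg field $E$ (with $\mathcal U^{\ell}\neq 0$ for every $\ell$); by the standard simplicity of the fixed-point subalgebra of such a Heisenberg grading, $\mathcal U^0$ is simple. As a simple level-$1$ quotient of $V^1(\g)$ it must be $V_1(\g)$, which together with $W=\mathcal U^0$ yields $V_1(\g)\cong\mathcal U^0=\mbox{Ker}_{M\otimes F}\,E(0)$. Equivalently, once the characters are matched one checks that $\Phi$ is injective, so that $\ker\Phi$ is the maximal ideal; and the identification respects the conformal structure, because (\ref{expression-omega-1}) already expresses $\omega_{c=0}$ through the currents, hence $\omega_{c=0}\in W$.
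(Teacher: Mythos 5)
Your first and third paragraphs are sound and essentially agree with the paper: the charge computation giving $W\subseteq\mathcal U^{0}$ is correct, and the paper likewise finishes by invoking simplicity of $\mathcal U^{0}$ (for $0\neq w\in\mathcal U^{0}$, simplicity of $\mathcal U$ gives $\mathcal U\cdot w=\mathcal U$, and the $E(0)$-grading then forces $\mathcal U^{0}\cdot w=\mathcal U^{0}$), so that the generated subalgebra, once shown to be all of $\mathcal U^{0}$, must be the \emph{simple} quotient $V_1(\g)$. The genuine gap is your second paragraph: the reverse inclusion $\mathcal U^{0}\subseteq W$ is the entire content of the proposition, and you do not prove it — you only name a strategy and concede it is ``the principal obstacle.'' Concretely, the character-comparison plan needs two inputs you never supply: (i) the character of $W\cong V^{1}(\g)/\ker\Phi$, which requires determining the maximal ideal of $V^{1}(gl(1\vert 1))$ at level one (equivalently, deciding whether the level-one vacuum module is already simple) — a question of essentially the same depth as the proposition itself; and (ii) even granting that, a nontrivial constant-term $q$-series identity matching the $w^{0}$-coefficient of the free-field character of $M\otimes F$ with the resulting character of $W$. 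Note moreover that matching $\mathrm{ch}\,\mathcal U^{0}$ against the PBW character of $V^{1}(\g)$ would prove $W=\mathcal U^{0}$ only if you separately know that $\Phi$ is injective, so input (i) cannot be bypassed.

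The paper's proof supplies exactly the missing idea and avoids characters altogether. Let $\widetilde V_1(\g)$ be the vertex subalgebra of $\mathcal U^{0}$ generated by the currents. If the inclusion $\widetilde V_1(\g)\subseteq \mathcal U^{0}$ were proper, the quotient $\hg$--module $\mathcal U^{0}/\widetilde V_1(\g)$ would contain a nonzero $\hg$--singular vector $v_{r,s}$, of weight $(r,s)$ with respect to $(N(0),E(0))$. Since $E(0)\equiv 0$ on $\mathcal U^{0}$ by definition, $s=0$; and then the Sugawara expression (\ref{expression-omega-1}) evaluated on a highest weight vector gives
\[ L^{c=0}(0)\,v_{r,0}=\tfrac{1}{2}\bigl(2N(0)E(0)-E(0)+E(0)^{2}\bigr)v_{r,0}=0, \]
so $v_{r,0}$ has conformal weight $0$, hence is proportional to the image of ${\bf 1}$, i.e.\ zero in the quotient — a contradiction. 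This ``Sugawara on singular vectors'' trick is what replaces your unproven character identity: it proves the generation statement directly, with no need to identify null vectors of $V^{1}(gl(1\vert 1))$ or to establish any $q$-series identity. As written, your proposal is a plan with its central step missing rather than a proof.
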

 \begin{proof}
   Let $ \widetilde V_1 (\g)$ be the vertex subalgebra of  $\mathcal U^0$ generated by $\g$. Assume that $ \widetilde V_1 (\g) \ne \mathcal U^0$. Then there is a subsingular vector   $v_{r,s} \notin {\C}{\1}$   for  $\hg$ of weight $(r, s)$ such that for $n > 0$:
  \bea &&   \Psi ^+ (0)  v_{r,s} \in \widetilde V_1 (\g),  \quad  X(n) v_{r,s} \in  \widetilde V_1 (\g), \quad X \in \{ E, N, 
\Psi^{\pm}\}\nonumber \\
&& E(0) v_{r,s} = s v_{r,s}, \ N(0) v_{r,s} = r v_{r,s}.\nonumber \eea
 In other words, $v_{r,s}$ is a singular vector  in  the quotient $\widetilde { \mathcal U ^0 }= \mathcal U ^0 /  \widetilde V_1 (\g)$.  Since $E(0)$ acts trivially on $\mathcal U ^{0}$, we conclude that $s=0$. Recalling the expression for the Virasoro conformal vector (\ref{expression-omega-1}), we get that in $\widetilde { \mathcal U ^0 }$:
 $$ L^{c=0} (0) v_{r,0} = ( \omega_{c=0} )_1 v_{r,0} = \tfrac{1}{2} ( 2  N (0) E (0)   - E(0) + E(0) ^2 ) v_{r,0} = 0. $$
 This implies that $v_{r,0}$ has the conformal weight $0$ and hence must be proportional to ${\bf 1}$. A contradiction.  Therefore, $\mathcal U ^0  =  \widetilde V_1 (\g)$. Since $\mathcal U^0$ is a simple vertex algebra,
 we have that $ \widetilde V_1 (\g) =   V_1 (\g)$.
 \end{proof}

 We can extend this irreducibility result to a wide class of weight modules. 
The proof is  similar to the one given in \cite[Theorem 6.2]{A-2007}.
 
 \begin{theorem}  \label{ired-general} Assume that $\mathcal N$ is an irreducible weight module for the Weyl vertex algebra $M$, such that $\beta(0)$ acts semisimply on $\mathcal N$:
 $$ \mathcal N = \bigoplus_{s \in {\Z} + \Delta} \mathcal N^s, \quad \beta(0) \vert N^s \equiv s \mbox{Id} \quad (\Delta \in {\C}). $$
 Then $\mathcal N \otimes F$ is a completely reducible $V_1(\g)$--module:
 $$ \mathcal N \otimes F = \bigoplus_{s \in {\Z}}  \mathcal L_s(N)  \quad  \mathcal L_s (N) = \{ v \in    \mathcal N \otimes F  \ \vert  E(0) v = ( s + \Delta) v \},$$ 
 and each $ \mathcal L_s(N) $ is irreducible $V_1(\g)$--module.
 \end{theorem}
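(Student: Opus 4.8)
The plan is to exploit the central element $E$ of $\g = gl(1\vert 1)$, whose zero mode induces a grading of $\mathcal N \otimes F$ that commutes with the entire $V_1(\g)$--action, and to combine this with the irreducibility of $\mathcal N \otimes F$ over the larger vertex algebra $\mathcal U = M \otimes F$.

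First I would record the grading. Since $E = \gamma + \beta$, on $\mathcal N \otimes F$ we have $E(0) = \beta(0) \otimes 1 + 1 \otimes \gamma(0)$. The operator $\gamma(0)$ acts semisimply on $F \cong V_{\Z\gamma}$ with spectrum $\Z$ (the charge decomposition), while by hypothesis $\beta(0)$ acts semisimply on $\mathcal N$ with spectrum $\Z + \Delta$; hence $E(0)$ is semisimple on $\mathcal N \otimes F$ with spectrum $\Z + \Delta$, and every eigenspace $\mathcal L_s(N) = \{ v : E(0) v = (s+\Delta) v\}$, $s \in \Z$, is nonzero. Because $E$ is central in $\g$, we have $[E(0), x(n)] = [E,x](n) = 0$ for all $x \in \g$, so $E(0)$ commutes with the action of the affine algebra and therefore with all operators $Y(w,z)$, $w \in V_1(\g)$. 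Consequently each $\mathcal L_s(N)$ is a $V_1(\g)$--submodule and $\mathcal N \otimes F = \bigoplus_{s \in \Z} \mathcal L_s(N)$ is a decomposition into $V_1(\g)$--modules; complete reducibility of $\mathcal N \otimes F$ thus reduces to irreducibility of each sector $\mathcal L_s(N)$.

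The key input is that $\mathcal N \otimes F$ is irreducible as a module for the vertex algebra $\mathcal U = M \otimes F$: this follows from the irreducibility of $\mathcal N$ over $M$ and the simplicity of the Clifford vertex algebra $F$, via irreducibility of the outer tensor product of irreducible modules over a tensor product of vertex algebras. By Proposition \ref{simpl-gl11} we have $V_1(\g) = \mathcal U^0 = \mbox{Ker}_{\mathcal U} E(0)$, and the $E(0)$--eigenspace decomposition makes $\mathcal U = \bigoplus_{m \in \Z} \mathcal U^m$ a $\Z$--graded vertex algebra with $\mathcal U^0 = V_1(\g)$; by the commutator formula $[E(0), w_{(n)}] = (E(0)w)_{(n)}$, the modes of $w \in \mathcal U^m$ map $\mathcal L_s(N)$ into $\mathcal L_{s+m}(N)$.

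To prove that $\mathcal L_{s_0}(N)$ is irreducible, I would take a nonzero $V_1(\g)$--submodule $W \subseteq \mathcal L_{s_0}(N)$ and consider the $\mathcal U$--submodule $\langle W \rangle_{\mathcal U}$ it generates. By the $\mathcal U$--irreducibility just established, $\langle W \rangle_{\mathcal U} = \mathcal N \otimes F$, so its charge--$s_0$ component equals $\mathcal L_{s_0}(N)$. The remaining, and main, step is to show that this charge--$s_0$ component is already exhausted by the $V_1(\g)$--action on $W$, i.e. $\langle W \rangle_{\mathcal U} \cap \mathcal L_{s_0}(N) = W$, which then forces $W = \mathcal L_{s_0}(N)$. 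This is where I expect the real work: since $\mathcal U$ is graded by the free group $\Z$ and the sectors $\mathcal U^{\pm 1}$ behave as invertible simple currents for $V_1(\g)$ — a reflection of the inverse pairings $[a(n), a^*(m)] = \delta_{n+m,0}$ and $\{\psi(r), \psi^*(s)\} = \delta_{r+s,0}$ among the charged generators — raising the charge by an element of $\mathcal U^m$ and lowering it back by $\mathcal U^{-m}$ can be reorganized, through the iterate and commutator formulas, into the action of charge--zero elements, that is of $V_1(\g)$. Carrying this charge--extraction out rigorously is the crux, and I would model it on \cite[Theorem 6.2]{A-2007}. Once each $\mathcal L_s(N)$ is shown to be irreducible, the decomposition above yields the asserted complete reducibility.
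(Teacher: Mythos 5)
Your setup---the $E(0)$--eigenspace decomposition of $\mathcal N \otimes F$, the observation that $E$ is central so each $\mathcal L_s(N)$ is a $V_1(\g)$--submodule, and the reduction of complete reducibility to irreducibility of the sectors via simplicity of $\mathcal N \otimes F$ over $\mathcal U = M\otimes F$---coincides with the paper's. But your proof stops exactly at the decisive point. You take a nonzero $V_1(\g)$--submodule $W \subseteq \mathcal L_{s_0}(N)$, pass to the $\mathcal U$--submodule $\langle W \rangle_{\mathcal U}$ it generates, and then must show $\langle W\rangle_{\mathcal U} \cap \mathcal L_{s_0}(N) = W$; you yourself label this step ``the crux'' and offer only a heuristic (charge raising and lowering by $\mathcal U^{\pm m}$ should reorganize into the action of $\mathcal U^0$, to be modeled on \cite{A-2007}) without carrying it out. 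As you have set it up, the step is genuinely delicate: $\langle W \rangle_{\mathcal U}$ is spanned by \emph{iterated} applications of modes, and a product $u_{(m)}v_{(n)}w$ with $u \in \mathcal U^{-r}$, $v \in \mathcal U^{r}$ does not visibly lie in $\mathcal U^0\cdot W$; the iterate and commutator formulas rewrite modes of $u_{(k)}v$ as sums of products of modes, not the other way around. So the proposal, as written, has a hole precisely where the work is.

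The paper closes this hole by arguing vector-by-vector and using a standard fact that you never invoke: for a simple module over a vertex algebra, the linear span of \emph{single} modes applied to any nonzero vector is already the whole module, i.e.\ $\mathcal U \cdot w := \mathrm{span}\{ u_{(n)} w \mid u \in \mathcal U,\ n \in \Z\} = \mathcal N \otimes F$, because this span is itself a submodule (cf.\ \cite{LL}). Granting this, take any nonzero $w \in W$. Single modes have a well-defined charge, so $\mathcal U^r \cdot w \subseteq \mathcal L_{s_0+r}(N)$ for every $r \in \Z$, and comparing graded components in $\sum_{r \in \Z} \mathcal U^r \cdot w = \bigoplus_{r\in\Z} \mathcal L_{s_0+r}(N)$ forces $\mathcal U^0 \cdot w = \mathcal L_{s_0}(N)$. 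Since $W$ is a module over $\mathcal U^0 = V_1(\g)$ (Proposition \ref{simpl-gl11}), this gives $\mathcal L_{s_0}(N) = \mathcal U^0 \cdot w \subseteq W$, hence $W = \mathcal L_{s_0}(N)$. With this one lemma inserted, your argument collapses into the paper's short proof; no simple-current or charge-reorganization machinery is needed, since one never has to consider iterated modes at all.
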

 \begin{proof}
 Clearly  $\mathcal L_s (N)$ is a $\mathcal U^0$$ (= V_1(\g))$--module.  It suffices to prove that each vector $ w \in \mathcal L_s (N) $ is cyclic.
Since $ \mathcal N \otimes F$ is a simple $\mathcal U$--module, we have that $\mathcal U. w = \mathcal N \otimes F$. On the other hand, $ \mathcal N \otimes F $ is ${\Z}$--graded $\mathcal U$--module so that
$$ \mathcal U ^{r} \cdot \mathcal L_{s} (N) \subset \mathcal L_{r+ s}(N), \quad (r, s \in {\Z}). $$
This implies that $\mathcal U^r . w \subset \mathcal L_{r+s} (N)$ for each $r \in {\Z}$. Theferore $\mathcal U^0 . w  = \mathcal L_{r} (N)$. The proof follows.
 \end{proof}

As a consequence we get a family of irreducible $V_1 (\g)$--modules:
 \begin{corollary} Assume that $\lambda, \mu \in {\C} \setminus {\Z}$. Then for each $s \in {\Z}$ we have:
 \begin{itemize}
 \item[(1)] $\mathcal L_s (\widetilde{ U( \lambda)})$ is an irreducible $V_1(\g)$--module,
 \item[(2)]  $\mathcal L_s (\widetilde{ U( \lambda, \mu )})$ is an irreducible $V_1(\g)$--module.
 \end{itemize} 
 \end{corollary}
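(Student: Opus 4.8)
The plan is to obtain both assertions as direct specializations of Theorem \ref{ired-general}, so the work reduces to verifying that its hypotheses are met by $\widetilde{U(\lambda)}$ and by $\widetilde{U(\lambda,\mu)}$. First I would invoke the earlier Propositions: for $\lambda \in \C \setminus \Z$ the module $\widetilde{U(\lambda)}$ is an irreducible weight $M$--module, and for $\lambda,\mu \in \C \setminus \Z$ the module $\widetilde{U(\lambda,\mu)}$ is likewise an irreducible weight $M$--module. In either case, by the very definition of a weight module the operator $\beta(0)$ acts semisimply, which is exactly the standing requirement of Theorem \ref{ired-general}.

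Next I would check the coset structure of the $\beta(0)$--spectrum appearing in the theorem's hypothesis, namely that the eigenvalues lie in a single coset $\Z + \Delta$. This is forced by irreducibility together with the commutation relations $[\beta(0), a(m)] = -a(m)$ and $[\beta(0), a^*(m)] = a^*(m)$ (specialize $n=0$ in the formulas for $[\beta(n), a(m)]$ and $[\beta(n), a^*(m)]$): since $M$ is generated by the modes of $a$ and $a^*$, which shift the $\beta(0)$--eigenvalue by $\mp 1$, on an irreducible module all eigenvalues differ by integers. Hence $\mathcal N = \widetilde{U(\lambda)}$ (respectively $\mathcal N = \widetilde{U(\lambda,\mu)}$) satisfies all hypotheses of Theorem \ref{ired-general}.

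Applying that theorem then yields $\mathcal N \otimes F = \bigoplus_{s \in \Z} \mathcal L_s(N)$ as a completely reducible $V_1(\g)$--module with each summand $\mathcal L_s(N)$ irreducible; taking $\mathcal N = \widetilde{U(\lambda)}$ gives (1) and $\mathcal N = \widetilde{U(\lambda,\mu)}$ gives (2). There is no serious obstacle here, since the entire content lives in Theorem \ref{ired-general} and the corollary is a transcription for these two explicit families. The one point I would flag is part (2): the module $\widetilde{U(\lambda,\mu)}$ has infinite--dimensional weight spaces with respect to $(\beta(0), L(0))$, so I would stress that Theorem \ref{ired-general} nowhere uses finite--dimensionality of weight spaces, whence its conclusion still applies and each $\mathcal L_s(\widetilde{U(\lambda,\mu)})$ is genuinely irreducible despite the infinite weight multiplicities.
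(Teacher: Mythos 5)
Your proof is correct and takes essentially the same route as the paper, which states this corollary as an immediate consequence of Theorem \ref{ired-general} applied to the irreducible weight $M$--modules $\widetilde{U(\lambda)}$ and $\widetilde{U(\lambda,\mu)}$ (established in the earlier propositions). Your extra check that the $\beta(0)$--spectrum lies in a single coset $\Z + \Delta$ — forced by irreducibility and the commutators $[\beta(0),a(m)]=-a(m)$, $[\beta(0),a^{*}(m)]=a^{*}(m)$ — and your remark that the theorem never uses finite--dimensionality of weight spaces are sound fillings-in of hypotheses the paper leaves implicit.
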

We will prove  in the next section that  $\mathcal L_s (\widetilde{ U( \lambda)})$ are irreducible highest weight modules. But one can see  that  $\mathcal L_s (\widetilde{ U( \lambda, \mu )})$ have infinite-dimensional weight spaces. A detailed analysis of  the structure of these modules will appear in our forthcoming papers (cf. \cite{AdP-2019}).

 \section{The calculation of fusion rules}
 
 In this section we will finish the calculation of fusion rules for the Weyl vertex algebra $M$. 
 
 We will first identify certain irreducible highest weight $\hg$--modules. 

\begin{lemma}  \label{identif-lema} Assume that $r, n \in {\Z}$, $\lambda \in {\C} $, $\lambda+ n \notin {\Z}$. Then
$e ^{ r (\beta + \gamma)  + (\lambda +n) (\alpha + \beta) }$ is a singular vector in $S\Pi _r(\lambda)$ and
\bea &&  U (\hg).  e ^{ r (\beta + \gamma)  + (\lambda +n) (\alpha + \beta) } \cong \widehat{ \mathcal{V} }_{  r + \tfrac{1}{2} ( \lambda + n ), -\lambda -n} ,  \label{identif-h-w}\\
&&  L(0)  e ^{ r (\beta + \gamma)  + (\lambda +n) (\alpha + \beta) }  = \frac{1}{2}  (1-2 r) (n+\lambda)   e ^{ r (\beta + \gamma)  + (\lambda +n) (\alpha + \beta) }.   \label{conf-w-m} \eea
 \end{lemma}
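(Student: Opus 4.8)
The plan is to exhibit $v:=e^{r(\beta+\gamma)+(\lambda+n)(\alpha+\beta)}$ as a highest weight vector for $\hg$ and then to identify the module $U(\hg).v$ it generates. Writing the exponent as $\xi=(\lambda+n)\alpha+(r+\lambda+n)\beta+r\gamma$, we have $v=e^{r\beta+(\lambda+n)(\alpha+\beta)}\otimes e^{r\gamma}\in\Pi_r(\lambda)\otimes F=S\Pi_r(\lambda)$, since $\lambda+n\in\Z+\lambda$ and $r\in\Z$. First I would record the Heisenberg zero--mode eigenvalues from $h(0)e^{\xi}=\langle h,\xi\rangle e^{\xi}$ together with $\langle\alpha,\alpha\rangle=\langle\gamma,\gamma\rangle=1$, $\langle\beta,\beta\rangle=-1$ and $\langle\alpha,\beta\rangle=\langle\alpha,\gamma\rangle=\langle\beta,\gamma\rangle=0$, obtaining $\langle\alpha,\xi\rangle=\lambda+n$, $\langle\beta,\xi\rangle=-(r+\lambda+n)$ and $\langle\gamma,\xi\rangle=r$. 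Since $E(0)=\beta(0)+\gamma(0)$ and $N(0)=\tfrac12(\gamma(0)-\beta(0))$, this gives $E(0)v=-(\lambda+n)v$ and $N(0)v=(r+\tfrac12(\lambda+n))v$, so $v$ carries the $\g$--weight $(r',s)$ with $r'=r+\tfrac12(\lambda+n)$ and $s=-\lambda-n\notin\Z$, matching the indices in \eqref{identif-h-w}.

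Next I would check that $v$ is singular, i.e. annihilated by $X(m)$ for $m>0$, $X\in\{E,N,\Psi^{\pm}\}$, and by $\Psi^{+}(0)$. The even currents are immediate, since $E(m),N(m)$ are linear in $\beta(m),\gamma(m)$, which kill $e^{\xi}$ for $m>0$. For the odd currents I would read off leading powers of $z$ in the lattice vertex operators. Because $\Psi^{+}=e^{\alpha+\beta+\gamma}$ and $\langle\alpha+\beta+\gamma,\xi\rangle=(\lambda+n)-(r+\lambda+n)+r=0$, the series $Y(\Psi^{+},z)v$ begins at $z^{0}$, so $\Psi^{+}(m)v=0$ for all $m\ge0$; in particular the essential condition $\Psi^{+}(0)v=0$ holds. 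For $\Psi^{-}=-\alpha(-1)e^{-\alpha-\beta-\gamma}$ the same pairing vanishes, but the prefactor $\alpha(-1)$ shifts the expansion: using $Y(\alpha(-1)e^{\zeta},z)=\,:\alpha(z)Y(e^{\zeta},z):\,$ with $\zeta=-\alpha-\beta-\gamma$, the only contribution to the lowest power comes from the annihilation part acting as $\alpha(0)e^{\xi}=(\lambda+n)e^{\xi}$, so $Y(\Psi^{-},z)v$ begins at $z^{-1}$. Hence $\Psi^{-}(m)v=0$ for $m\ge1$, while $\Psi^{-}(0)v\ne0$ with coefficient proportional to $\lambda+n\ne0$. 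This vertex--operator bookkeeping (the normal ordering and cocycle factors for the odd fields) is the step I expect to be the main obstacle.

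Granting this, I would identify the generated module. The zero modes $E(0),N(0),\Psi^{\pm}(0)$ close into $\g=gl(1 \vert 1)$, and since $\Psi^{+}(0)v=0$ with $s\ne0$, the span of $v$ and $\Psi^{-}(0)v$ is precisely the two--dimensional irreducible $\g$--module $\mathcal{V}_{r',s}$. As $v$ is annihilated by all positive modes (and the same then holds for $\Psi^{-}(0)v$, via the bracket relations), the universal property of the level--$1$ module induced from $\mathcal{V}_{r',s}$ yields a surjective $\hg$--homomorphism from $\widehat{\mathcal{V}}_{r',s}$ onto $U(\hg).v$ carrying the generating copy of $\mathcal{V}_{r',s}$ onto this top space. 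Because $s=-\lambda-n\notin\Z$, the module $\widehat{\mathcal{V}}_{r',s}$ is irreducible, so this nonzero surjection has trivial kernel and is an isomorphism, which is \eqref{identif-h-w}.

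Finally, for \eqref{conf-w-m} I would evaluate the Sugawara operator $L(0)=(\omega_{c=0})_1$ directly on $v$ using $\omega_{c=0}=\tfrac12(\alpha(-1)^2-\alpha(-2)-\beta(-1)^2+\gamma(-1)^2){\bf 1}$: each quadratic term $h(-1)^2$ contributes $\langle h,\xi\rangle^2$ (with the sign carried by its coefficient in $\omega_{c=0}$) and the linear term $-\alpha(-2)$ contributes $+\langle\alpha,\xi\rangle$, giving the eigenvalue
$$\tfrac12\big((\lambda+n)^2+(\lambda+n)-(r+\lambda+n)^2+r^2\big)=\tfrac12(1-2r)(\lambda+n),$$
the cross terms telescoping away. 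Equivalently, one can evaluate the general Sugawara highest--weight formula at $(r',s)$, exactly as in the proof of Proposition \ref{simpl-gl11}.
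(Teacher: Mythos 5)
Your proposal is correct and follows essentially the same route as the paper: verify via lattice vertex operator calculations that $e^{r(\beta+\gamma)+(\lambda+n)(\alpha+\beta)}$ is a highest weight vector with weight $\bigl(r+\tfrac12(\lambda+n),-\lambda-n\bigr)$ for $(N(0),E(0))$, identify $U(\hg).v$ as a quotient of the Verma module $\widehat{\mathcal{V}}_{r+\frac12(\lambda+n),-\lambda-n}$, invoke irreducibility of that Verma module since $-\lambda-n\notin\Z$, and compute $L(0)$ from the free-field expression $\omega=\tfrac12(\alpha(-1)^2-\alpha(-2)-\beta(-1)^2+\gamma(-1)^2)$. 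Your write-up only adds more explicit bookkeeping (the pairing computations, the nonvanishing of $\Psi^-(0)v$, and the universal-property argument) that the paper leaves implicit.
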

 \begin{proof}
 By using standard calculation in lattice vertex algebras we get for $m \ge 0$
 \bea
 \Psi^+ (m ) e ^{ r (\beta + \gamma)  + (\lambda +n) (\alpha + \beta) }  &=& e^{\alpha+ \beta + \gamma}_{m} e ^{ r (\beta + \gamma)  + (\lambda +n) (\alpha + \beta) } = 0, \nonumber \\
  \Psi^- (m+1) e ^{ r (\beta + \gamma)  + (\lambda +n) (\alpha + \beta) }  &=& \left(  -\alpha(-1)  e^{-\alpha- \beta - \gamma} \right)_{m+1} e ^{ r (\beta + \gamma)  + (\lambda +n) (\alpha + \beta) } = 0, \nonumber \\
  E(m) e ^{ r (\beta + \gamma)  + (\lambda +n) (\alpha + \beta) }  &=& - (\lambda + n) \delta_{m ,0} e ^{ r (\beta + \gamma)  + (\lambda +n) (\alpha + \beta) },   \nonumber \\
  N(m)  e ^{ r (\beta + \gamma)  + (\lambda +n) (\alpha + \beta) }  &=&     \frac{1}{2} ( 2 r +  \lambda + n )  \delta_{m ,0} e ^{ r (\beta + \gamma)  + (\lambda +n) (\alpha + \beta) }. \nonumber  
 \eea
 Therefore  $e ^{ r (\beta + \gamma)  + (\lambda +n) (\alpha + \beta) } $ is a highest weight vector for $\hg$ with highest weight    $(r + \tfrac{1}{2} ( \lambda + n ), -\lambda -n)$ with respect to $(N(0), E(0))$. This implies that $U (\hg )$ is isomorphic to a certain quotient of the Verma module $\widehat{ \mathcal{V} }_{  r + \tfrac{1}{2} ( \lambda + n ), -\lambda -n} $. But since,  $\lambda+ n \notin {\Z}$, $\widehat{ \mathcal{V} }_{  r + \tfrac{1}{2} ( \lambda + n ), -\lambda -n} $ is irreducible and therefore   (\ref{identif-h-w}) holds. Relation (\ref{conf-w-m}) follows by  applying the expression $\omega = \frac{1}{2} (\alpha(-1) ^2 - \alpha(-2) - \beta(-1) ^2 + \gamma(-1) ^2)$: 
 \bea  && L(0) e ^{ r (\beta + \gamma)  + (\lambda +n) (\alpha + \beta) }   \nonumber \\ = && \frac{1}{2} \left( (\lambda+n) ^2 - (\lambda + n + r)^2 + r^2 +  (\lambda + n)   \right)  e ^{ r (\beta + \gamma)  + (\lambda +n) (\alpha + \beta) } \nonumber   \\
=  &&  \frac{1}{2}  (1-2 r) (n+\lambda)   e ^{ r (\beta + \gamma)  + (\lambda +n) (\alpha + \beta) }. \nonumber  \eea
 \end{proof}
 
\begin{theorem}
Assume that $r \in {\Z}$, $\lambda \in {\C} \setminus {\Z}$. Then we have:
\item[(1)] $S\Pi _r(\lambda)$ is an irreducible $M \otimes F$--module,

\item[(2)] $S\Pi _r(\lambda)$ is a completely reducible $\widehat{ gl(1\vert 1)}$--module:
\bea  S\Pi _r(\lambda ) &\cong& \bigoplus _{ s \in {\Z} } U (\hg). e ^{ r (\beta + \gamma)  + (\lambda +s) (\alpha + \beta) } \nonumber \\
&\cong& \bigoplus _{ s \in {\Z} } \widehat {\mathcal{V}} _{   r + \tfrac{1}{2} ( \lambda + s ), -\lambda -s} . \label{dec-int-12}\eea

\end{theorem}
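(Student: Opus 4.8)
The plan is to deduce (1) from the module irreducibility already recorded in Proposition \ref{ired-weyl-1} together with a tensor--product argument, and then to feed (1) into Theorem \ref{ired-general} to obtain the $\hg$--decomposition in (2), naming the summands by means of Lemma \ref{identif-lema}.

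First I would prove (1). By Proposition \ref{ired-weyl-1} the module $\Pi_r(\lambda) \cong \rho_{-r+1}(\widetilde{U(-\lambda)})$ is already irreducible over the \emph{smaller} algebra $M$, and $F$ is simple over itself. Since the (outer) tensor product of an irreducible $M$--module with an irreducible $F$--module is an irreducible module for $\mathcal U = M \otimes F$ (cf. \cite{FHL}), it follows that $S\Pi_r(\lambda) = \Pi_r(\lambda)\otimes F$ is an irreducible $\mathcal U$--module. The point worth underlining is that irreducibility is required over $M\otimes F$ rather than over the larger $\Pi(0)\otimes F$, and this is exactly what the $M$--irreducibility of $\Pi_r(\lambda)$ in Proposition \ref{ired-weyl-1} supplies.

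Next I would establish (2). The module $\mathcal N := \Pi_r(\lambda)$ is an irreducible weight $M$--module on which $\beta(0)$ acts semisimply: on the lattice vector $e^{r\beta + (n+\lambda)(\alpha+\beta)}$ one has $\beta(0)$--eigenvalue $-(r+n+\lambda) \in -r-\lambda+{\Z}$, and the Heisenberg modes preserve this eigenvalue. Thus the hypotheses of Theorem \ref{ired-general} hold, and by (1) the module $\mathcal N \otimes F = S\Pi_r(\lambda)$ is a simple $\mathcal U$--module, so Theorem \ref{ired-general} yields that $S\Pi_r(\lambda)$ is a completely reducible $V_1(\g)$--module which splits as the direct sum of the eigenspaces of $E(0)$, each of them an irreducible $V_1(\g)$--module. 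It remains to name these eigenspaces. For each $s \in {\Z}$ the vector $v_s := e^{r(\beta+\gamma)+(\lambda+s)(\alpha+\beta)}$ lies in $S\Pi_r(\lambda)$, and Lemma \ref{identif-lema} shows it is an $\hg$--singular vector with $E(0)v_s = -(\lambda+s)v_s$ generating a copy of $\widehat{\mathcal V}_{r+\tfrac{1}{2}(\lambda+s),\,-\lambda-s}$. Because $E$ is central in $\g$, the operator $E(0)$ is central in $U(\hg)$, so $U(\hg).v_s$ stays inside the $E(0)=-(\lambda+s)$ eigenspace; and as $s$ runs over ${\Z}$ the numbers $-(\lambda+s)$ exhaust the entire $E(0)$--spectrum $-\lambda+{\Z}$ of $S\Pi_r(\lambda)$. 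Since each eigenspace is irreducible and contains the nonzero submodule $U(\hg).v_s$, the two must coincide, giving $U(\hg).v_s \cong \widehat{\mathcal V}_{r+\tfrac{1}{2}(\lambda+s),\,-\lambda-s}$ and hence the decomposition \eqref{dec-int-12}.

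The hard part is concentrated in the two external inputs: the tensor--product irreducibility behind (1), and the verification that the highest weight modules $U(\hg).v_s$ exhaust the $E(0)$--eigenspaces in (2). Once Proposition \ref{ired-weyl-1}, Theorem \ref{ired-general} and Lemma \ref{identif-lema} are granted, the remainder is a matter of tracking the $E(0)$--grading and matching indices, with no further analytic input required.
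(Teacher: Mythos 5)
Your proposal is correct and takes essentially the same route as the paper: assertion (1) from the $M$--irreducibility of $\Pi_r(\lambda)$ in Proposition \ref{ired-weyl-1} (with the tensor--product irreducibility over $M\otimes F$ left implicit in the paper), and assertion (2) by applying Theorem \ref{ired-general} to the $E(0)$--eigenspace decomposition of $S\Pi_r(\lambda)$ and identifying each eigenspace via the singular vectors of Lemma \ref{identif-lema}. The extra details you supply (the $\beta(0)$--semisimplicity check on $\Pi_r(\lambda)$ and the argument that $U(\hg).v_s$ fills out its whole $E(0)$--eigenspace by irreducibility) are exactly the steps the paper leaves to the reader.
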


\begin{proof}
 The assertion (1) follows from the fact that $\Pi _r(\lambda)$  is an irreducible $M$--module (cf. Proposition \ref{ired-weyl-1}). Note next that the operator $E(0) = \beta (0) +\gamma(0)$ acts semi--simply on $M \otimes F$:
 $$ M \otimes F = \bigoplus  _{s\in {\Z}}  ( M \otimes F ) ^ {(s)}, \quad  ( M \otimes F ) ^ {(s)} = \{ v \in M \otimes F  \vert \ E(0) v = -s v \}. $$
 In particular, $  ( M \otimes F ) ^ {(0)} \cong V_1 (\g)$ (cf. \cite{K2} and Proposition \ref{simpl-gl11}).
 But $E(0) $ also defines the following  $\Z$--gradation on $S\Pi _r(\lambda)$:
 $$S\Pi _r(\lambda)  = \bigoplus  _{s\in {\Z}} S\Pi _r(\lambda) ^ {(s)}, \quad S\Pi _r(\lambda) ^ {(s)} = \{ v \in S\Pi _r(\lambda)   \vert \ E(0) v = (-s - \lambda) v \}. $$
 Applying Theorem \ref{ired-general}  we see that each  $S\Pi _r(\lambda) ^ {(s)}$ is an irreducible  $(M \otimes F ) ^ {(0)} \cong V_1 (\g)$--module.  Using Lemma  \ref{identif-lema} we see that it is an irreducible  highest weight $\hg$--module with highest weight
 vector $e ^{ r (\beta + \gamma)  + (\lambda +s) (\alpha + \beta) }$. The proof follows.
\end{proof}

 \begin{theorem} \label{fusion-rules-1}
 Assume that $\lambda_1, \lambda_2, \lambda_1 + \lambda_2 \in {\C}\setminus {\Z}$, $r_1, r_2, r_3 \in {\Z}$.
 Assume that there is a non-trivial intertwining operator of type
$$ { S \Pi_{r_3} (\lambda_3) \choose S\Pi_{r_1} (\lambda_1) \ \  S\Pi_{r_2} (\lambda_2) } $$
in the category of $M\otimes F$--modules.
 Then $\lambda_3 = \lambda_1 + \lambda_2$ and $r_3 = r_1 + r_2$, or $r_3 = r_1 + r_2 - 1 $.
 \end{theorem}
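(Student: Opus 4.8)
The plan is to push the given intertwining operator down to the vertex subalgebra $V_1(\g) = \mathcal U^0 = \mathrm{Ker}_{M\otimes F} E(0)$ and then invoke the $\hg$--fusion constraint of Proposition \ref{non-gl11}. Since $V_1(\g) = \mathcal U^0$ is a vertex subalgebra of $\mathcal U = M\otimes F$ by Proposition \ref{simpl-gl11}, restricting the Jacobi identity of the given intertwining operator $I$ to vectors $a\in\mathcal U^0$ immediately yields a $V_1(\g)$--intertwining operator of type $ {S\Pi_{r_3}(\lambda_3) \choose S\Pi_{r_1}(\lambda_1)\ S\Pi_{r_2}(\lambda_2)}$, where each $S\Pi_{r_i}(\lambda_i)$ is now regarded as a $V_1(\g)$--module. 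The $L(-1)$--derivative property survives the restriction because $L(-1)=D$ is the canonical translation operator of $\mathcal U$, independent of the chosen conformal vector.

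Next I would organize everything by the charge grading coming from $E(0)$. Because $E$ is central in $\g$, its zero mode $E(0)$ commutes with the whole $\hg$--action, so the $E(0)$--eigenspaces are $V_1(\g)$--submodules; by the decomposition \eqref{dec-int-12} of the preceding theorem these eigenspaces are exactly the irreducible modules $\widehat{\mathcal V}_{r_i+\frac12(\lambda_i+s),\,-\lambda_i-s}$, $s\in\Z$, the summand of label $s$ being the $E(0)$--eigenspace of eigenvalue $-\lambda_i-s$. The essential tool is the charge--conservation identity $[E(0),I(b,z)]=I(E(0)b,z)$, which is the $m=0$ case of the intertwining-operator commutator formula applied to the current $E$; it shows that if $b,c$ lie in $E(0)$--eigenspaces of eigenvalues $e_1,e_2$, then $I(b,z)c$ lies in the eigenspace of eigenvalue $e_1+e_2$.

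Then, since $I\neq 0$, I would choose $b,c$ with $I(b,z)c\neq 0$, decompose them into $E(0)$--eigencomponents, and keep a pair of components, with labels $s_1,s_2$, that still produces a non-zero result. By charge conservation this output lies in the eigenspace of $S\Pi_{r_3}(\lambda_3)$ of eigenvalue $-\lambda_1-\lambda_2-s_1-s_2$, which must therefore be non-zero; since the eigenvalues of $S\Pi_{r_3}(\lambda_3)$ form the coset $-\lambda_3+\Z$, this already forces $\lambda_3\equiv\lambda_1+\lambda_2\pmod\Z$, so $\lambda_3\notin\Z$ and \eqref{dec-int-12} applies to $S\Pi_{r_3}(\lambda_3)$ as well. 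The restriction of $I$ to these three summands is then a non-trivial $V_1(\g)$--intertwining operator of type ${\widehat{\mathcal V}_{a_3,b_3}\choose \widehat{\mathcal V}_{a_1,b_1}\ \widehat{\mathcal V}_{a_2,b_2}}$ with $a_i=r_i+\tfrac12(\lambda_i+s_i)$ and $b_i=-\lambda_i-s_i$. This extraction step is where I expect the only real subtlety to lie: turning a single non-trivial operator between direct sums into one between individual irreducible summands. Charge conservation is precisely what makes it clean, since it forces the image of a charged pair into a \emph{single} summand, so no genuine projection is needed and the restricted map is automatically an intertwining operator.

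Finally I would apply Proposition \ref{non-gl11}. Its hypothesis $b_1,b_2,b_1+b_2\notin\Z$ holds because $\lambda_1,\lambda_2,\lambda_1+\lambda_2\notin\Z$ and the $s_i$ are integers. The conclusion gives $b_3=b_1+b_2$, that is $\lambda_3+s_3=\lambda_1+\lambda_2+s_1+s_2$, and $a_3=a_1+a_2$ or $a_3=a_1+a_2-1$. Using $b_3=b_1+b_2$, the half-integer $\lambda$--contributions cancel in $a_3-(a_1+a_2)=r_3-r_1-r_2$, so the two cases translate precisely into $r_3=r_1+r_2$ and $r_3=r_1+r_2-1$, while the relation on the $b$'s gives $\lambda_3=\lambda_1+\lambda_2$ (read in $\C/\Z$, reflecting the intrinsic $\Z$--ambiguity in the label $\lambda$ of $S\Pi_r(\lambda)$). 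This is the asserted constraint.
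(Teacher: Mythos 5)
Your proof is correct and follows essentially the same route as the paper's: both reduce the given operator to a non-trivial $V_1(\g)$--intertwining operator between irreducible summands of the decomposition (\ref{dec-int-12}) and then invoke Proposition \ref{non-gl11} to pin down the labels. The only notable difference is the extraction step: the paper applies \cite[Proposition 11.9]{DL} to the singular vectors $e^{r_i(\beta+\gamma)+\lambda_i(\alpha+\beta)}$ to get non-triviality of the restricted operator, whereas you use $E(0)$--charge conservation, which has the small added benefit of establishing $\lambda_3\equiv\lambda_1+\lambda_2 \pmod{\Z}$ (hence $\lambda_3\notin\Z$) before the decomposition of the target module is used --- a point the paper leaves implicit.
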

 \begin{proof}
 Assume that $I$ is an non-trivial intertwining operator of type $$ { S \Pi_{r_3} (\lambda_3) \choose S\Pi_{r_1} (\lambda_1) \ \  S\Pi_{r_2} (\lambda_2) }, $$ Since $S\Pi_r (\lambda)$ are simple $M \otimes F$--modules, we have that for every $s_1, s_2 \in {\Bbb Z}$:
 $$ I  ( e ^{ r_1 (\beta + \gamma)  + (\lambda_1 +s_1 ) (\alpha + \beta) }  , z)   e ^{ r_2 (\beta + \gamma)  + (\lambda_2 +s_2 ) (\alpha + \beta) }  \ne 0. $$
Here we use the well-known result which states that for every non-trivial  intertwining operator $I$  between three irreducible modules we have that  $I(v,z) w \ne 0$ (cf. \cite[Proposition 11.9]{DL}).
 Note that $e ^{ r_i (\beta + \gamma)  + \lambda_i(\alpha + \beta) }$   is a singular vector for $\hg$ which generates $V_1(\g)$--module  $\widehat{\mathcal{V}}_{  r_i + \tfrac{1}{2}  \lambda_i, -\lambda_i}$, $i=1,2$. The restriction of $I(\cdot,z)$ on 
$$ \widehat{\mathcal{V}}_{  r_1 + \tfrac{1}{2}  \lambda_1, -\lambda_1} \otimes  \widehat{\mathcal{V}}_{  r_2 + \tfrac{1}{2}  \lambda_2, -\lambda_2} $$
gives a non-trivial intertwining operator 
$$ {  S \Pi_{r_3} (\lambda_3) 
\choose \widehat{\mathcal{V}}_{  r_1 + \tfrac{1}{2}  \lambda_1, -\lambda_1}  \ \    \widehat{\mathcal{V}}_{  r_2 + \tfrac{1}{2}  \lambda_2, -\lambda_2}  } $$
in the category of $V_1(\g)$--modules.
Proposition \ref{non-gl11} implies that then  $$\widehat{\mathcal{V}}_{  r_1 + r_2 + \tfrac{1}{2}  ( \lambda_1 + \lambda_2) , -\lambda_1- \lambda_2} \quad \mbox{or} \quad \widehat{\mathcal{V}}_{  r_1 + r_2 + \tfrac{1}{2}  ( \lambda_1 + \lambda_2) -1 , -\lambda_1- \lambda_2} $$
has to appear in the decomposition of  $S \Pi_{r_3} (\lambda_3) $ as a $V_1(\g)$--module.
Using decomposition (\ref{dec-int-12}) we get that there is $s \in {\Z}$ such that 
\bea &&  r_1 + r_2 + \tfrac{1}{2}  ( \lambda_1 + \lambda_2)  =  r_3 + \tfrac{1}{2} ( \lambda + s ), \quad   -\lambda_1- \lambda_2  = -\lambda -s \label{jedn-prva} \eea
or 
\bea && r_1 + r_2- 1 + \tfrac{1}{2}  ( \lambda_1 + \lambda_2)  =  r_3 + \tfrac{1}{2} ( \lambda + s ), \quad   -\lambda_1- \lambda_2  = -\lambda -s . \label{jedn-druga} \eea
Solution of  (\ref{jedn-prva}) is  $$ \lambda + s= \lambda_1 + \lambda_2, \ r_3  =r_1 + r_2, $$ and of (\ref{jedn-druga})   is $$ \lambda + s= \lambda_1 + \lambda_2,  \quad r_3  =r_1 + r_2-1.$$
Since $S\Pi _r(\lambda)  \cong S\Pi _r(\lambda + s) $ for $ s \in {\Z}$, we can take $s = 0$. Thus,
$\lambda_3 = \lambda_1 + \lambda_2$ and $r_3 = r_1 + r_2$ or $r_3 = r_1 + r_2 - 1$.
The claim holds.
%
 %
 \end{proof}

By using the following natural isomorphism of the spaces of intertwining operators (cf. \cite[Section 2]{ADL}): $$ \mbox{I}_{M \otimes F} { S \Pi_{r_3} (\lambda_3) \choose S\Pi_{r_1} (\lambda_1) \ \ S\Pi_{r_2} (\lambda_2) } \cong   \mbox{I}_{M } {  \Pi_{r_3} (\lambda_3) \choose \Pi_{r_1} (\lambda_1) \ \  \Pi_{r_2} (\lambda_2) },$$
Theorem \ref{fusion-rules-1} implies the fusion rules result in the category of modules for the Weyl vertex algebra $M$ (see also \cite[Corollary 6.7]{RW}, for a derivation of the same fusion rules using Verlinde formula).

\begin{corollary} Assume that $\lambda_1, \lambda_2, \lambda_1 + \lambda_2 \in {\C}\setminus {\Z}$, $r_1, r_2, r_3 \in {\Z}$.
 There exists  a non-trivial intertwining operator of type
$$ {  \Pi_{r_3} (\lambda_3) \choose \Pi_{r_1} (\lambda_1) \ \  \Pi_{r_2} (\lambda_2) } $$
in the category of $M$--modules 
 if and only if  $\lambda_3 = \lambda_1 + \lambda_2$ and $r_3 = r_1 + r_2$ or $r_3 = r_1 + r_2 - 1 $.
 
 The fusion rules in the category of weight $M$--modules are given by
 $$ \Pi_{r_1} (\lambda_1)  \times \Pi_{r_2} (\lambda_2)  =  \Pi_{r_1+r_2 } (\lambda_1+ \lambda_2)  + \Pi_{r_1+ r_2-1} (\lambda_1 + \lambda_2).  $$
\end{corollary}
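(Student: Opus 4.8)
The plan is to transfer the entire computation from the Weyl vertex algebra $M$ to the tensor product $M \otimes F$, where the fusion rules are controlled by those of $V_1(\g)$, and then read off the answer through the natural isomorphism of spaces of intertwining operators
$$ \mbox{I}_{M \otimes F} { S \Pi_{r_3} (\lambda_3) \choose S\Pi_{r_1} (\lambda_1) \ \ S\Pi_{r_2} (\lambda_2) } \cong \mbox{I}_{M } {  \Pi_{r_3} (\lambda_3) \choose \Pi_{r_1} (\lambda_1) \ \  \Pi_{r_2} (\lambda_2) } $$
recorded just above (cf. \cite[Section 2]{ADL}), which identifies the corresponding fusion coefficients. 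First I would dispose of the \emph{only if} direction: if there is a non-trivial $M$--intertwining operator of the given type, then by the displayed isomorphism the associated $M \otimes F$--space is non-trivial, so Theorem \ref{fusion-rules-1} applies and forces $\lambda_3 = \lambda_1 + \lambda_2$ together with $r_3 = r_1 + r_2$ or $r_3 = r_1 + r_2 - 1$. In particular $N^{R}_{\Pi_{r_1}(\lambda_1),\Pi_{r_2}(\lambda_2)} = 0$ for every irreducible weight $M$--module $R$ not isomorphic to $\Pi_{r_1+r_2}(\lambda_1+\lambda_2)$ or $\Pi_{r_1+r_2-1}(\lambda_1+\lambda_2)$.

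Next I would establish existence and pin down the two surviving coefficients. Existence is immediate from Proposition \ref{konstrukcija-int-op}: specializing $\ell_1 = r_1$, $\ell_2 = r_2$, $\lambda = \lambda_1$, $\mu = \lambda_2$ produces non-trivial $M$--intertwining operators of both types ${\Pi_{r_1+r_2}(\lambda_1+\lambda_2) \choose \Pi_{r_1}(\lambda_1)\ \Pi_{r_2}(\lambda_2)}$ and ${\Pi_{r_1+r_2-1}(\lambda_1+\lambda_2) \choose \Pi_{r_1}(\lambda_1)\ \Pi_{r_2}(\lambda_2)}$, so both fusion coefficients are at least $1$; this also settles the \emph{if} direction. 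For the matching upper bound I would pass back to $M \otimes F$ and restrict any such intertwining operator to the top $\hg$--components, exactly as in the proof of Theorem \ref{fusion-rules-1}: this sends a non-zero operator to a non-zero $V_1(\g)$--intertwining operator among the irreducible highest weight modules $\widehat{\mathcal V}$, and the tensor product decomposition (\ref{dek-2}) is multiplicity free, whence each of the two surviving coefficients is at most $1$. Combining the two bounds gives each coefficient equal to $1$, and hence the stated fusion rule $\Pi_{r_1}(\lambda_1) \times \Pi_{r_2}(\lambda_2) = \Pi_{r_1+r_2}(\lambda_1+\lambda_2) + \Pi_{r_1+r_2-1}(\lambda_1+\lambda_2)$.

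The main obstacle is the upper bound, that is, showing that each fusion coefficient is exactly $1$ rather than merely positive. The delicate points are the injectivity of the restriction map from $S\Pi$--intertwining operators to $V_1(\g)$--intertwining operators, and the multiplicity-freeness on the $V_1(\g)$ side. The former relies on the irreducibility of $S\Pi_r(\lambda)$ as an $M \otimes F$--module (Proposition \ref{ired-weyl-1} and Theorem \ref{ired-general}) together with the nonvanishing property $I(v,z)w \ne 0$ for intertwining operators among irreducibles (cf. \cite[Proposition 11.9]{DL}), precisely the input already used in Theorem \ref{fusion-rules-1}. Everything else reduces to bookkeeping of the numerical constraints coming from Proposition \ref{non-gl11} and the decomposition (\ref{dec-int-12}).
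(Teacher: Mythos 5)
Your proposal is correct and follows essentially the same route as the paper: transfer to $M\otimes F$ via the intertwining-operator isomorphism of \cite{ADL}, Theorem \ref{fusion-rules-1} for the ``only if'' direction, and Proposition \ref{konstrukcija-int-op} (together with Proposition \ref{ired-weyl-1}) for existence. The only difference is that you also spell out the upper bound that each surviving fusion coefficient equals exactly $1$, by restricting to the top $\widehat{gl(1\vert 1)}$--components and using the multiplicity-freeness of (\ref{dek-2}); the paper leaves this point implicit, so that extra care is a welcome refinement rather than a departure.
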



 


\footnotesize{
  \noindent{\bf D.A.}:  Department of Mathematics, Faculty of Science, University of Zagreb, Bijeni\v{c}ka 30, 10 000 Zagreb, Croatia;\newline
{\tt adamovic@math.hr}

 \noindent{\bf V.P.}:  Department of Mathematics, Faculty of Science, University of Zagreb, Bijeni\v{c}ka 30, 10 000 Zagreb, Croatia;\newline
{\tt vpedic@math.hr}
 
}

\end{document}